\journal{Linear Algebra and its Applications}
\pgfplotsset{compat=1.8}
\newcommand{\cond}{\mathrm{cond}}
\newtheorem{theorem}{Theorem}[section]
\newtheorem{remark}[theorem]{Remark}
\newtheorem{definition}[theorem]{Definition}
\newtheorem{example}[theorem]{Example}
\newtheorem{alg}[theorem]{Algorithm}
\newenvironment{Remark}{\goodbreak \begin{remark}\slshape}{\end{remark}}
\newenvironment{Example}{\goodbreak \begin{example}\slshape}{\end{example}}
\def\imod#1{\allowbreak\mkern10mu({\operator@font mod}\,\,#1)}
\let\c@algorithm\c@theorem  % algorithms numbered like definitions, ... 
\newenvironment{algorithm}[1]{\goodbreak~\begin{alg}[#1]~\vspace{-9pt}~\\
		\rule{\linewidth}{0.5pt}~\\}{\vspace{-9pt}~\\
		\rule{\linewidth}{0.5pt}~\end{alg}}
\numberwithin{equation}{section}
\numberwithin{table}{section}
\numberwithin{figure}{section}
\newcommand{\e}{\mathrm e}
\renewcommand{\i}{\mathrm i}
\renewcommand{\b}{\boldsymbol} 
\newcommand{\R}{\mathbb R}
\newcommand{\C}{\mathbb C}
\newcommand{\Z}{\mathbb Z}
\newcommand{\N}{\mathbb N}
\newcommand{\T}{\mathbb T}
\renewcommand{\H}{\mathcal H}
\newcommand{\ex}{\hspace*{0ex} \hfill \hbox{\vrule height
		1.5ex\vbox{\hrule width 1.4ex \vskip 1.4ex\hrule  width 1.4ex}\vrule
		height 1.5ex}}
\long\def\symbolfootnote[#1]#2{\begingroup%
	\def\thefootnote{\fnsymbol{footnote}}\footnote[#1]{#2}\endgroup}
\begin{document}	

\begin{frontmatter}

\title{Direct inversion of the nonequispaced fast Fourier transform}
\author{Melanie Kircheis}\ead{melanie.kircheis@math.tu-chemnitz.de}
\author{Daniel Potts}\ead{potts@math.tu-chemnitz.de}
\address{Technische Universit\"at Chemnitz, Faculty of Mathematics, 09107 Chemnitz, Germany}
%\corref{correspondingauthor}
%\cortext[correspondingauthor]{Corresponding author. Tel.: 000000000000;  Fax: 0000000000000.}

\begin{abstract}
Various applications such as MRI, solution of PDEs, etc. need to perform an inverse nonequispaced fast Fourier transform (NFFT), i.\,e., compute $M$ Fourier coefficients from given $N$ nonequispaced data.
In the present paper we consider direct methods for the inversion of the NFFT.
We introduce algorithms for the setting $M=N$ as well as for the underdetermined and over\-determined cases.
For the setting $M=N$ a direct method of complexity $\mathcal O(N\log N)$ is presented, which utilizes Lagrange interpolation and the fast summation.
For the remaining cases, we use the matrix representation of the NFFT to deduce our algorithms.
Thereby, we are able to compute an inverse NFFT up to a certain accuracy by means of a modified adjoint NFFT in $\mathcal O(M\log M+N)$ arithmetic operations.
Finally, we show that these approaches can also be explained by means of frame approximation.
\end{abstract}

\begin{keyword}
	inverse nonequispaced fast Fourier transform \sep nonuniform fast Fourier transform \sep direct inversion \sep frame approximation \sep iNFFT \sep NFFT \sep NUFFT
\MSC
	65Txx \sep % Numerical methods in Fourier Analysis
	42C15 % General harmonic expansions, frames
\end{keyword}

\end{frontmatter}

%===============================================================================

\section{Introduction}

The NFFT, short hand for nonequispaced fast Fourier transform or nonuniform fast Fourier transform (NUFFT), respectively, is a fast algorithm to evaluate a trigonometric polynomial
\begin{equation}
\label{eq:trig_polynomial}
	f(x) = \sum_{k = -\frac M2}^{\frac M2 -1} \hat f_k\, \e^{2\pi\i k x}
\end{equation}
for given Fourier coefficients \mbox{$\hat f_k\in\C$}, \mbox{$k=-\frac M2, \dots, \frac M2-1$}, \mbox{$M\in 2\N$}, at nonequispaced points\mbox{ $x_j \in \left[-\frac 12,\frac 12\right)$}, \mbox{$\,j=1,\dots,N$}.
In case we are given equi\-spaced points and $M=N$, this evaluation can be realized by means of the fast Fourier transform (FFT).
For this setting also an algorithm for the inverse problem is known.
Hence, we are interested in an inversion also for non\-equi\-spaced data, i.\,e., the Fourier coefficients~$\hat f_k$ shall be computed for given function values~$f_j=f(x_j)$ of the trigonometric polynomial~\eqref{eq:trig_polynomial}.
Additionally, we study the inversion of the adjoint problem, namely the reconstruction of function values~\mbox{$f_j\in\C$} from given data
\begin{equation}
\label{eq:nfft*}
	h_k = \sum_{j=1}^N f_j\, \e^{-2\pi\i kx_j}, \quad k=-\tfrac M2, \dots, \tfrac M2 - 1.
\end{equation}

In general, the number~$N$ of nodes~$x_j$ is independent from the number~$M$ of Fourier coefficients~$\hat f_k$ and therefore the nonequispaced Fourier matrix
\begin{equation}
\label{eq:matrix_A}
	\b A \coloneqq \left(\e^{2\pi\i k x_j}\right)_{j=1,\, k=-\frac M2}^{N,\; \frac M2 -1}\ \in \mathbb C^{N\times M},
\end{equation}
which we would have to invert, is rectangular in most cases.
Nevertheless, several approaches have been developed to compute an inverse NFFT (iNFFT).
First of all, there are some iterative methods.
Recently, in \cite{RuTo18} an algorithm was published for the setting $M=N$ which is based on the CG method as well as low rank approximation and is specially designed for jittered equispaced points.
An approach for the overdetermined case can be found in \cite{FeGrSt95}, where the solution is computed iteratively by means of the CG algorithm using the Toeplitz structure of $\b A^* \b W \b A$ with a diagonal matrix~$\b W$ of Voronoi weights.
In \cite{kupo04} the CG method in connection with the NFFT was used to formulate an iterative algorithm for the underdetermined setting which deploys $\b A \b{\hat W} \b A^*$ with weights $\b{\hat W}$ based on kernel approximation.
Furthermore, already in \cite{duro95} a direct method was explained for the setting $M=N$ which uses Lagrange interpolation as well as fast multipole methods.
Based on this, in \cite{Selva18} another direct method was deduced for the same setting which also uses Lagrange interpolation but additionally incorporates an imaginary shift for the fast evaluation of occurring sums.
Since $\b A^* \b A$ is a Toeplitz matrix another direct method for the overdetermined setting can be derived using this special structure, see \cite{HeRo84}, analogously to \cite{AvShSh16}.
In addition, also a frame-theoretical approach is known from~\cite{GeSo14} which provides a link between the adjoint NFFT and frame approximation and could therefore be seen as a method to invert the NFFT.

In this paper we present new direct methods for inverting the NFFT in general.
For the quadratic setting, i.\,e., $M=N$, we review our method introduced in \cite{KircheisBA17}, which is also based on Lagrange interpolation but utilizes the fast summation to evaluate occurring sums.
For the general case, we use as a motivation that for equispaced points an inversion can be realized by 
$\b A \b A^* \approx M \b I_N$ 
and aim to generalize this result to find a good approximation of the inversion for nonequispaced nodes.
To this end, we employ the decomposition 
$\b A \approx \b B \b F \b D$
known from the NFFT approach and compute the sparse matrix $\b B$ such that we receive an approximation of the form
$\b A \b D^* \b F^* \b B^* \approx M \b I_N$.
In other words, we are able to compute an inverse NFFT by means of a modified adjoint NFFT.
Analogously, an inverse adjoint NFFT can be obtained by modifying the NFFT.
Hence, the inversions can be computed in \mbox{$\mathcal O(M\log M+N)$} arithmetic operations.
The necessary precomputations developed in this paper are of complexity $\mathcal O(N^2)$ and $\mathcal O(M^2)$, respectively.
Therefore, our method is especially beneficial in case we are given fixed nodes for several problems.
%By means of iterative solvers the approximation could be improved additionally.
Finally, we show that these approaches can also be explained by means of frame approximation.

The present work is organized as follows.
In Section~\ref{sec:nfft} we introduce the already mentioned algorithm, the NFFT.
Afterwards, in Section~\ref{sec:inversion} we deal with the inversion of this algorithm.
In Section~\ref{subsec:lagrange} we firstly review our method from~\cite{KircheisBA17} for the quadratic setting~$M=N$.
Secondly, in Section~\ref{subsec:rect} the underdetermined and overdetermined settings are studied, which are treated separately in Sections~\ref{subsubsec:ansatz1} and \ref{subsubsec:ansatz2}.
Finally, in Section~\ref{sec:frames} we deduce an approach for the inversion which is based on frame theory.
Therefore, first of all, the main ideas of frames and approximation via frames will be introduced in Section~\ref{subsec:approx} and subsequently, in Section~\ref{subsec:link}, we will use these ideas to develop an approach for the iNFFT adapted from \cite{GeSo14}.
In the end, we will see that this frame-theoretical approach can be traced back to the methods for the inversion introduced in Section~\ref{subsec:rect}.

\section[Nonequispaced fast Fourier transform (NFFT)]{Nonequispaced fast Fourier transform\label{sec:nfft}}

For given nodes \mbox{$x_j \in \left[-\frac 12, \frac 12\right)$},\, \mbox{$j=1,\dots,N$}, \mbox{$M \in 2\N$}, as well as arbitrary Fourier coefficients \mbox{$\hat{f_k} \in \C,$} \mbox{$k = -\frac M2,\dots,\frac M2-1,$} we consider the computation of the sums 
\begin{equation}
\label{eq:nfft}
	f_j = f(x_j) = \sum_{k=-\frac M2}^{\frac M2 -1} \hat{f_k}\, \e^{2\pi\i kx_j}, \quad j=1,\dots,N,
\end{equation}
as well as the adjoint problem of the computation of the sums~\eqref{eq:nfft*} for given values $f_j \in \C$.
A fast algorithm to solve this problem is called \textbf{nonequispaced fast Fourier transform (NFFT)} and is briefly explained below, cf. \cite{duro93, bey95, st97, postta01, post02, GrLe04, KeKuPo09, RuTo18}.

By defining the matrix~\eqref{eq:matrix_A} as well as the vectors
\mbox{$\b f\coloneqq\left(f_j\right)_{j=1}^N$}, 
\mbox{$\b{\hat f}\coloneqq(\hat f_k)_{k = -\frac M2}^{\frac M2 -1}$}
and
\mbox{$\b h \coloneqq (h_k)_{k = -\frac M2}^{\frac M2 -1}$},
the computation of sums of the form \eqref{eq:nfft} and \eqref{eq:nfft*} can be written as
\mbox{$\b f = \b A \b{\hat f}$}
and
\mbox{$\b h = \b A^* \b f$},
where 
\mbox{$\b A^* = \overline{\b A}^{\mathrm T}$} 
denotes the adjoint matrix of~$\b A$.

\subsection{The NFFT\label{subsec:nfft}}

We firstly restrict our attention to problem~\eqref{eq:nfft}, 
which is equivalent to the evaluation of a trigonometric polynomial~$f$ at nodes~$x_j$, see~\eqref{eq:trig_polynomial}.
At first, we approximate~$f$ by a linear combination of translates of a \mbox{1-periodic} function~$\tilde{w}$, i.\,e., 
\begin{equation*}
	f(x) \approx s_1(x) \coloneqq 
	\sum_{l=-\frac{M_\sigma}{2}}^{\frac{M_\sigma}{2}-1} g_l\, \tilde w \hspace{-2pt} \left(x-\tfrac{l}{M_\sigma}\right),
\end{equation*}
where \mbox{$M_\sigma=\sigma M$} with the so-called \textbf{oversampling factor} \mbox{$\sigma \geq 1$}.
%This corresponds to a discrete convolution.
In the easiest case $\tilde w$ originates from periodization of a function 
\mbox{$w \colon [-\frac 12, \frac 12) \to \R$}. 
Let this so-called \textbf{window function} be chosen such that its \mbox{1-periodic} version
\mbox{$\tilde w (x) = \sum_{r\in\Z} w(x+r)$}
has an absolutely convergent Fourier series.
By means of the definition
\begin{equation*}
	\hat g_k \coloneqq \sum_{l=-\frac{M_\sigma}{2}}^{\frac{M_\sigma}{2}-1} g_l\, \e^{-2\pi\i kl/{M_\sigma}}, 
	\quad k \in \Z,
\end{equation*}
and the convolution theorem, $s_1$ can be represented as
\begin{align}
	\label{eq:s1}
	s_1(x) 
	&= 
	\sum_{k=-\infty}^{\infty} c_k(s_1)\, \e^{2\pi\i kx}
	%	= 
	%	\sum_{k=-\infty}^{\infty} \hat g_k\;c_k(\tilde{w})\, \e^{2\pi\i kx}\notag
	\\
	&= 
	\sum_{k=-\frac{M_\sigma}{2}}^{\frac{M_\sigma}{2}-1} \hat g_k\;c_k(\tilde{w})\, \e^{2\pi\i kx} + \sum_{r=-\infty \atop{r\ne 0}}^{\infty} \sum_{k=-\frac{M_\sigma}{2}}^{\frac{M_\sigma}{2}-1} \hat g_k\;c_{k+M_\sigma r}(\tilde{w})\, \e^{2\pi\i (k+M_\sigma r)x}. \notag
\end{align}
Comparing \eqref{eq:nfft} and \eqref{eq:s1} gives rise for the following definition. We set
\begin{equation*}
	\hat g_k\coloneqq
	\left\{
	\begin{array}{cl}
	\dfrac{\hat f_k}{\hat w(k)} &: k\in\{-\frac M2,\dots,\frac M2-1\},\\
	0 &: k\in\{-\frac{M_\sigma}{2},\dots,\frac{M_\sigma}{2}-1\}\setminus\{-\frac M2,\dots,\frac M2-1\},
	\end{array}
	\right.
\end{equation*}
where the Fourier transform of~$w$ is given by
\begin{equation}
\label{eq:fourier_transform}
	\hat w(k) 
	= 
	\int_{-\infty}^{\infty} w(x) \, \e^{-2\pi\i kx} \,\mathrm dx 
	= 
	\int_{-\frac 12}^{\frac 12} \tilde w(x) \, \e^{-2\pi\i kx} \,\mathrm dx 
	= 
	c_k(\tilde w).
\end{equation}
Furthermore, we suppose $w$ is small outside the interval 
\mbox{$\left[-\sfrac{m}{M_\sigma}, \sfrac{m}{M_\sigma}\right]$,\,} \mbox{$m \ll M_\sigma.$} 
Then $w$ can be approximated by 
\mbox{$w_m (x) = \chi _{\left[-\sfrac{m}{M_\sigma}, \sfrac{m}{M_\sigma}\right]} \cdot w(x)$},
which is compactly supported since
\mbox{$\chi _{\left[-\sfrac{m}{M_\sigma}, \sfrac{m}{M_\sigma}\right]}$}
denotes the characteristic function of %the interval 
\mbox{$\left[-\sfrac{m}{M_\sigma}, \sfrac{m}{M_\sigma}\right].$} 
Thus, $\tilde w$ can be approximated by the \mbox{1-periodic} function~$\tilde w_m$ with
\begin{equation*}
	\sum_{k\in\Z} \hat w(k) \, \e^{2\pi\i kx}
	=
	\tilde w(x) \approx \tilde w_m (x) 
	= 
	\sum_{r \in \Z} w_m (x+r).
\end{equation*} 
Hence, we obtain the following approximation
\begin{equation*}
\label{eq:truncation}
	f(x_j) 
	\approx 
	s_1(x_j) 
	\approx 
	s(x_j) 
	\coloneqq 
	\sum_{l=-\frac{M_\sigma}{2}}^{\frac{M_\sigma}{2}-1} g_l\, \tilde w_m \hspace{-2.5pt} \left(x_j-\tfrac{l}{M_\sigma}\right) 
	= 
	\sum_{l = \lceil M_\sigma x_j \rceil -m}^{\lfloor M_\sigma x_j\rfloor +m} g_l\, \tilde w_m \hspace{-2.5pt} \left(x_j - \tfrac{l}{M_\sigma}\right),
\end{equation*}
where simplification arises because many summands vanish.
By defining
\begin{itemize}
	\item the diagonal matrix
	\begin{equation}
	\label{eq:matrix_D}
		\b D 
		\coloneqq 
		\text{diag} \left( \frac 1{M_{\sigma}\hat{w}(k)} \right)_{k=-\frac M2}^{\frac M2-1} 
		\ \in \mathbb C^{M\times M},
	\end{equation}
	\item the truncated Fourier matrix
	\begin{equation}
	\label{eq:matrix_F}
		\b F 
		\coloneqq 
		\left( \e^{2\pi\i k \frac l{M_{\sigma}}} \right)_{l=-\frac{M_{\sigma}}{2}, \,k=-\frac M2}^{\frac{M_{\sigma}}{2} -1, \; \frac M2-1} 
		\ \in \mathbb C ^{M_{\sigma}\times M},
	\end{equation}
	\item and the sparse matrix
	\begin{equation}
	\label{eq:matrix_B}
		\b B 
		\coloneqq 
		\bigg( \tilde w_m \hspace{-2.5pt} \left(x_j - \tfrac l{M_{\sigma}}\right) \bigg)_{j=1,\, l=-\frac{M_{\sigma}}{2}}^{N,\; \frac{M_{\sigma}}{2}-1} 
		\ \in \mathbb R^{N\times M_{\sigma}},
	\end{equation}
\end{itemize} 
this can be formulated in matrix-vector notation and we receive the approximation 
\mbox{$\b A \approx \b B \b F \b D$}.
Therefore, the corresponding fast algorithm consisting of three steps is of complexity \mbox{$\mathcal O(M\log M+N)$}.

\begin{Remark} 
\label{remark:window_functions}
Suitable window functions can be found in \cite{duro93, bey95, st97, DuSc, Fou02, GrLe04, KeKuPo09}.
\ex
\end{Remark}

\begin{Remark}
It must be pointed out that because of consistency the factor~$\frac{1}{M_\sigma}$ is here not located in the matrix~$\b F$ as usual but in the matrix~$\b D$.
\ex
\end{Remark}

\subsection{The adjoint NFFT\label{subsec:nfft*}}

Now we consider the problem~\eqref{eq:nfft*}, which is treated similarly to~\cite{postta01}, and therefore we firstly define the function
\begin{equation}
\label{eq:function_g_tilde}
	\tilde g(x) \coloneqq \sum_{j=1}^{N} f_j \, \tilde w(x_j-x)
\end{equation}
and calculate its Fourier coefficients
\begin{align*}
	c_k(\tilde g) 
	&=
	\int_{-\frac 12}^{\frac 12} \tilde g(x) \, \e^{-2\pi\i kx} \, \mathrm dx 
	=
	%	\int_{-\frac 12}^{\frac 12} \sum_{j=1}^{N} f_j \, \tilde w(x_j-x) \, \e^{-2\pi\i kx} \, \mathrm dx \\
	%	&=
	\sum_{j=1}^{N} f_j \, \e^{-2\pi\i kx_j} \int_{-\frac 12}^{\frac 12} \tilde w(y) \, \e^{2\pi\i ky} \, \mathrm dy
	= 
	h_k \, c_{-k}(\tilde w).
\end{align*}
In other words, the values~$h_k$ can be computed if $c_{-k}(\tilde w)$ and $c_k(\tilde g)$ are known.
The Fourier coefficients of $\tilde g$ are determined approximately by means of the trapezoidal rule%, i.\,e., 
\begin{equation*}
	c_k(\tilde g) 
	\approx 
	\frac{1}{M_\sigma} \sum_{l=-\frac{M_\sigma}{2}}^{\frac{M_\sigma}{2}-1} 
	\sum_{j=1}^{N} f_j \, \tilde w \hspace{-1.5pt} \left( x_j - \tfrac{l}{M_\sigma} \right) \, \e^{-2\pi\i k l/M_\sigma}.
\end{equation*}
Let the function~$w$ moreover be well localized in time so that $\tilde w$ can be replaced by~$\tilde w_m$ again.
Then we obtain the approximation 
\begin{equation}
\label{eq:approx_nfft*}
	\frac{c_k(\tilde g)}{c_{-k}(\tilde w)} 
	\approx
	\frac{1}{M_\sigma \hat w(-k)} 
	\sum_{l=-\frac{M_{\sigma}}{2}}^{\frac{M_{\sigma}}{2}-1} 
	\sum_{j=1}^{N} 
	f_j\, \tilde w_m \hspace{-2.5pt} \left(x_j-\tfrac{l}{M_{\sigma}}\right) \, \e^{-2\pi\i kl/M_{\sigma}}
	\eqqcolon 
	\tilde h_k.
\end{equation}
Rewriting this by means of \eqref{eq:matrix_D}, \eqref{eq:matrix_F} and \eqref{eq:matrix_B} 
%and keeping in mind that for real-valued~$w$ it holds
%%
%\begin{equation*}
%	\hat w(-k) 
%	=
%	\int_{-\infty}^{\infty} w(x) \, \e^{2\pi\i kx} \, \mathrm dx 
%	=
%	\overline{ \int_{-\infty}^{\infty} w(x) \, \e^{-2\pi\i kx} \, \mathrm dx }
%	= 
%	\overline{\hat w(k)},
%\end{equation*}
%%
we receive 
\mbox{$\b A^* \approx \b D^* \b F^* \b B^*$}.
Hence, the algorithm for the adjoint problem is also of complexity \mbox{$\mathcal O(M\log M+N)$}.

\section{Inversion of the NFFT\label{sec:inversion}}

Having introduced the fast methods for nonequispaced data, we aim to find an inversion for these algorithms
encouraged by the fact that for equispaced data the inversion is well-known.
Therefore, we face the following two problems.
\begin{enumerate}
	\item[(1)] 
	Solve
	\begin{equation}
	\label{eq:problem_infft}
		\begin{split}
		\b A \b{\hat f} &= \b f, \\
		\text{ given: } \b f \in \C^N,& \text{ find: } \b{\hat f} \in \C^M, 
		\end{split}
	\end{equation}
	i.\,e., reconstruct the Fourier coefficients \mbox{$\b{\hat f} = (\hat f_k)_{k=-\frac M2}^{\frac M2-1}$} from given function values \mbox{$\b f = (f_j)_{j=1}^{N}$}.
	This will be solved by an \textbf{inverse NFFT}.
	\item[(2)]
	Solve
	\begin{equation}
	\label{eq:problem_infft*}
		\begin{split}
		\b A^* \b f &= \b h, \\
		\text{ given: } \b h \in \C^M,& \text{ find: } \b f \in \C^N,
		\end{split}
	\end{equation}
	i.\,e., reconstruct the coefficients \mbox{$\b f = (f_j)_{j=1}^{N}$} from given data \mbox{$\b h = (h_k)_{k=-\frac M2}^{\frac M2-1}$}.
	This will be solved by an \textbf{inverse adjoint NFFT}.
\end{enumerate}
In both problems the numbers $M$ and~$N$ are independent.
It is obvious that except for the quadratic setting \mbox{$M=N$} there are two different ways to choose $M$ and~$N$.
The first possibility is \mbox{$M<N$}, i.\,e., for the inverse NFFT in~\eqref{eq:problem_infft} we are given more function values than Fourier coefficients, which we are supposed to find.
That means, we are in an overdetermined setting.
The second variation is the converse setting \mbox{$M>N$}, where we have to find more Fourier coefficients than we are given initial data.
Hence, this is the underdetermined case.
Analogously, the same relations can be considered for the inverse adjoint NFFT in~\eqref{eq:problem_infft*}.
There $M$ belongs to the given data whereas $N$ goes with the wanted solution.
Thus, the overdetermined case in now \mbox{$M>N$} while the problem is underdetermined for \mbox{$M<N$}.
%Therefore, we end up with five cases which shall be considered separately below.

This section is organized as follows.
Firstly, in Section~\ref{subsec:lagrange} the inversions are derived for the quadratic case \mbox{$M=N$}.
Secondly, in Section~\ref{subsubsec:ansatz1} we survey the underdetermined case of the inverse NFFT,
which corresponds to the overdetermined case of the adjoint.
Finally, in Section~\ref{subsubsec:ansatz2} the overdetermined case of the inverse NFFT is explained, which is related to the underdetermined case of the adjoint.

\subsection{The quadratic case\label{subsec:lagrange}}

For the quadratic case \mbox{$M=N$} we use an approach analogous to \cite{duro95, Selva18}, where an inversion is realized by means of Lagrange interpolation.
While the fast algorithms are obtained in~\cite{duro95} by means of FMM, our method from~\cite{KircheisBA17} employs the fast summation for acceleration, see~\cite{post02}.

The main idea is to use a relation between two evaluations of a trigonometric polynomial
\begin{equation*}
	f_j \coloneqq f(y_j) = \sum_{k = -\frac N2}^{\frac N2 -1} \hat f_k\, \e^{2\pi\i k y_j}, \quad j=1,\dots,N,
\end{equation*}
and
\begin{equation}
\label{eq:quadratic_eval_equispaced}
	g_l \coloneqq f(x_l) = \sum_{k = -\frac N2}^{\frac N2 -1} \hat f_k\, \e^{2\pi\i k x_l}, \quad l=1,\dots,N, 
\end{equation}
for different nodes~\mbox{$x_l, y_j \in \left[-\frac 12, \frac 12\right)$},\,\mbox{$l,j=1,\dots,N,$} and Fourier coefficients \mbox{$\hat f_k \in \C$},\, \mbox{$k = -\frac N2, \dots, \frac N2-1$}.
By defining the coefficients
\begin{equation}
\label{eq:coeff_ab}
	a_l = \prod_{n=1}^{N} \sin(\pi(x_l-y_n))
	\quad\text{ and }\quad
	b_j = \prod_{n=1\atop{n\ne j}}^{N} \frac{1}{\sin(\pi(y_j-y_n))},
	\quad
	l,j = 1,\dots, N,
\end{equation}
we observe the relation 
\begin{equation}
\label{eq:lagrange_formula}
	g_l = a_l \sum_{j=1}^{N} f_j\, b_j \left(\frac{1}{\tan(\pi(x_l-y_j))}-\i\right), \quad l = 1,\dots, N,
\end{equation}
cf. \cite[Theorem~2.3]{duro95}.
Hence, for given nonequispaced nodes~$y_j$ the computation of an inverse NFFT can be realized by choosing additional points~$x_l$ and applying formula~\eqref{eq:lagrange_formula}.
If these nodes~$x_l$ are chosen equidistantly we can compute the Fourier coefficients~$\hat f_k$ by simply applying an FFT to the coefficients~$g_l$ in \eqref{eq:quadratic_eval_equispaced}.

\begin{Remark}
It must be pointed out that the considered approach is only applicable for disjoint sets of nodes.
If this condition is violated this would mean division by zero in case of the coefficients~$b_j$, cf. \eqref{eq:coeff_ab}.
However, this can easily be avoided since we are free to choose the equispaced points $x_l$ distinct from $y_j$.
\ex
\end{Remark}

We approximate the coefficients~$g_l$ in \eqref{eq:lagrange_formula} by using the fast summation, see~\cite{post02}.
Considering the computation scheme we see that it is possible to compute
\begin{equation}
\label{eq:coeff_g_tilde}
	\tilde g_l \coloneqq \sum_{j=1}^{N} f_j\, b_j \cot(\pi(x_l-y_j)), \quad l=1,\dots,N,
\end{equation}
by means of the fast summation.
Then the wanted coefficients~$g_l$ can be obtained by
\begin{equation}
\label{eq:coeff_g}
	g_l = a_l \left(\tilde g_l - \i \cdot \sum_{j=1}^{N} f_j\, b_j\right), \quad l=1,\dots,N,
\end{equation}
where we only have to compute an additional scalar product of two vectors, which requires only $\mathcal O(N)$ arithmetic operations.
Considering the kernel \mbox{$K(x) = \cot(\pi x)$} in detail it becomes apparent that this function has not only a singularity at \mbox{$x=0$} but also at the boundary \mbox{$x=\pm 1$}. 
Thus, we have to make an effort.
For detailed information about the computation see~\cite{KircheisBA17}.

The coefficients~$a_l$ and~$b_j$ can also be computed efficiently by the fast summation because of the observations
\begin{equation*}
\label{eq:logarithm_coeff_a}
	\tilde a_l \coloneqq \ln  |a_l| 
	= 
	\ln \left|\,\prod_{n=1}^{N} \sin(\pi(x_l-y_n))\right|
	= 
	\sum_{n=1}^{N} \,\ln \left|\sin(\pi(x_l-y_n))\right|
\end{equation*}
and
\begin{align*} 
\label{eq:logarithm_coeff_b}
	\tilde b_j \coloneqq \ln |b_j|
	& = 
	\ln \left|\,\prod_{n=1\atop{n\ne j}}^{N} \frac{1}{\sin(\pi(y_j-y_n))}\right|
	%	= 
	%	\sum_{n=1\atop{n\ne j}}^{N} \,\ln \left(\frac{1}{\sin(\pi(y_j-y_n))}\right)
	%	\\
	%	& =
	%	\sum_{n=1\atop{n\ne j}}^{N} \,\underbrace{\ln 1}_{=0} - \ln\left(\sin(\pi(y_j-y_n))\right) \notag
	=
	-\sum_{n=1\atop{n\ne j}}^{N} \,\ln\left|\sin(\pi(y_j-y_n))\right|.
\end{align*}
Therefore, it is possible to use the kernel~\mbox{$K(x) = \ln(|\sin(\pi x)|)$} to compute the absolute values and perform a sign correction afterwards to receive the signed coefficients~$a_l$ and~$b_j$.
Having a closer look at the kernel it becomes apparent that this function is also one-periodic and shows singularities at the same positions as the cotangent does. 
Hence, the computation works analogously.

\begin{Remark}
\label{remark:stability_lagrange}
A straightforward implementation of \eqref{eq:lagrange_formula} can lead to overflow and underflow issues.
The closer our nodes are the more formula \eqref{eq:lagrange_formula} suffers from these issues. 
However, having a look at the coefficients $a_l$ and $b_j$ we recognize that the decrease and the increase are in the same scale.
Thus, for $s\in \R$ we realize a stabilization by considering
\mbox{$|a_l| \cdot |b_j|
= 
\e^{\ln |a_l|} \, \e^{\ln |b_j|}
= 
\e^{\tilde a_l} \, \e^{\tilde b_j}
= 
\e^{\tilde a_l+s} \, \e^{\tilde b_j-s},$}
i.\,e., instead of computing products of the coefficients we use the additional factor~$\e^s$ to overcome numerical difficulties.
To bring the coefficients as close together as possible we choose
\mbox{$s = \min \left\{\min_j \{\tilde d_j\}, \min_l\{\tilde c_l\}\right\}$}.
For details see also \cite[./matlab/infft1d]{nfft3}.
\ex
\end{Remark}

Thus, we obtain the following fast algorithm.

\begin{algorithm}{Fast inverse NFFT - quadratic case}
	\label{alg:inverse_nfft_lagrange_fast}
	For \mbox{$N\in 2\N$} let be given equispaced nodes~\mbox{$x_l\in\left[-\frac 12, \frac 12\right),\, l=1,\dots,N$}, nonequispaced nodes~\mbox{$y_j \in \left[-\frac 12, \frac 12\right)$} and \mbox{$f_j\in\C,\, j=1,\dots,N$}.
	\begin{enumerate}
		\item Compute $\tilde a_l=\ln|a_l|$ and $\tilde b_j=\ln|b_j|$ by means of the fast summation using the kernel function \mbox{$K(x) = \ln(|\sin(\pi x)|)$}.
		\hfill $\mathcal O(N\log N)$
		\item Determine the stabilization factor $s$ and set $a_l = \e^{\tilde a_l+s}$ and $b_j = \e^{\tilde b_j-s}$.
		\hfill $\mathcal O(N)$
		\item Perform a sign correction for~$a_l$ and $b_j$.
		\hfill $\mathcal O(N)$\\
		(If the nodes have to be sorted, we end up with $\mathcal O(N\log N)$)
		\item Compute $\tilde g_l$ analogously to $\tilde a_l$ by means of the fast summation with the kernel function~\mbox{$K(x) = \cot(\pi x)$}, cf. \eqref{eq:coeff_g_tilde}.
		\hfill $\mathcal O(N\log N)$
		\item Compute $g_l$ via \eqref{eq:coeff_g}.
		\hfill $\mathcal O(N)$ 
		\item Compute
		\begin{equation*}
		\check f_k = \frac 1N \sum_{l=1}^{N} g_l \,\e^{-2\pi\i kx_l}, \quad k = -\tfrac N2, \dots, \tfrac N2-1,
		\end{equation*}
		by means of an FFT.
		\hfill $\mathcal O(N\log N)$	
	\end{enumerate}
	\vspace{2pt} 
	Output: $\check f_k \approx \hat f_k$
	\vspace{2pt} \\
	Complexity: $\mathcal O(N\log N)$
\end{algorithm}

\begin{Remark}
This algorithm is part of the software package NFFT 3.4.1, see \cite[./matlab/infft1d]{nfft3}.
\ex
\end{Remark}

Now we have a look at some numerical examples.

\begin{Example}
\label{ex:trig_poly_lagrange}
We choose arbitrary Fourier coefficients~\mbox{$\hat f_k \in [1,100]$} and compute the evaluations of the related trigonometric polynomial~\eqref{eq:trig_polynomial}.
Out of these we want to retrieve the given $\hat f_k$.
As mentioned in \cite{GeSo14, GeSo16, AuTr16} we examine so-called jittered equispaced nodes
\begin{equation}
\label{eq:jittered_nodes}
	y_j = -\frac 12 + \frac{j-1}{N} + \frac 1{4N}\,\theta, \quad j = 1, \dots, N, \ \text{with}\ \theta \sim U(0,1),
\end{equation}
where \mbox{$U(0,1)$} denotes the uniform distribution on the interval $(0,1)$ and the factor $\frac 14$ is our choice for the arbitrary perturbation parameter which has to be less than $\frac 12$ in order to not let the nodes switch position.
For a detailed study of this sampling pattern we refer to \cite[Section 4]{AuTr16}.
We consider the absolute and relative errors per node
\begin{equation}
\label{eq:errors_per_node_lagrange}
	\frac{e_r^{\mathrm{abs}}}{N} = \frac 1N \|\b{\hat f}-\b{\check f}\|_{r}
	\quad\text{ and }\quad
	\frac{e_r^{\mathrm{rel}}}{N} = \frac{\|\b{\hat f}-\b{\check f}\|_{r}}{N\,\|\b{\hat f}\|_{r}}
\end{equation}
for $r\in\{2,\infty\}$, where $\b{\check f}$ is the outcome of Algorithm~\ref{alg:inverse_nfft_lagrange_fast}.

As a first experiment we use \mbox{$N=2^c$} with \mbox{$c=1,\dots,14$}, and for the parameters needed in the fast summation we use the standard values, see~\cite{nfft3}.
In a second experiment we fix \mbox{$N=1024$} and increase some of the standard values, namely the cut-off parameter $m$ and the degree of smoothness $p$ shall be chosen uniformly \mbox{$m=p=c$} with \mbox{$c=4,\dots,12$}.

The corresponding results are depicted in Figure~\ref{fig:errors_lagrange}.
Having a look at the errors per node for growing $N$, see (a), we observe that the errors are worse if we consider very small sizes of $N$.
Otherwise, we recognize that these errors remain stable for large sizes of $N$.
In (b) we can see that for fixed $N$ a higher accuracy can be achieved by tuning the parameters of the fast summation.
\begin{figure}[!h]
	\centering
	\captionsetup[subfigure]{justification=centering}
	\begin{subfigure}{0.4\textwidth}
		\includegraphics[width=0.9\textwidth]{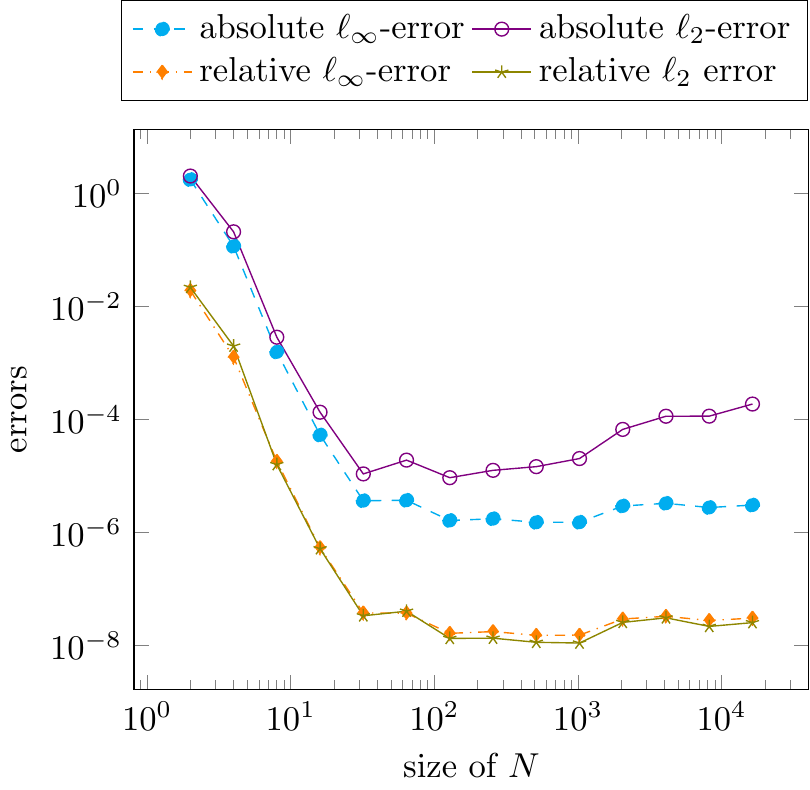}
		\caption{$N=2^c,\, c=1,\dots,14,$ \\ and $m=p=4$.}
	\end{subfigure}
	\begin{subfigure}{0.4\textwidth}
		\includegraphics[width=0.9\textwidth]{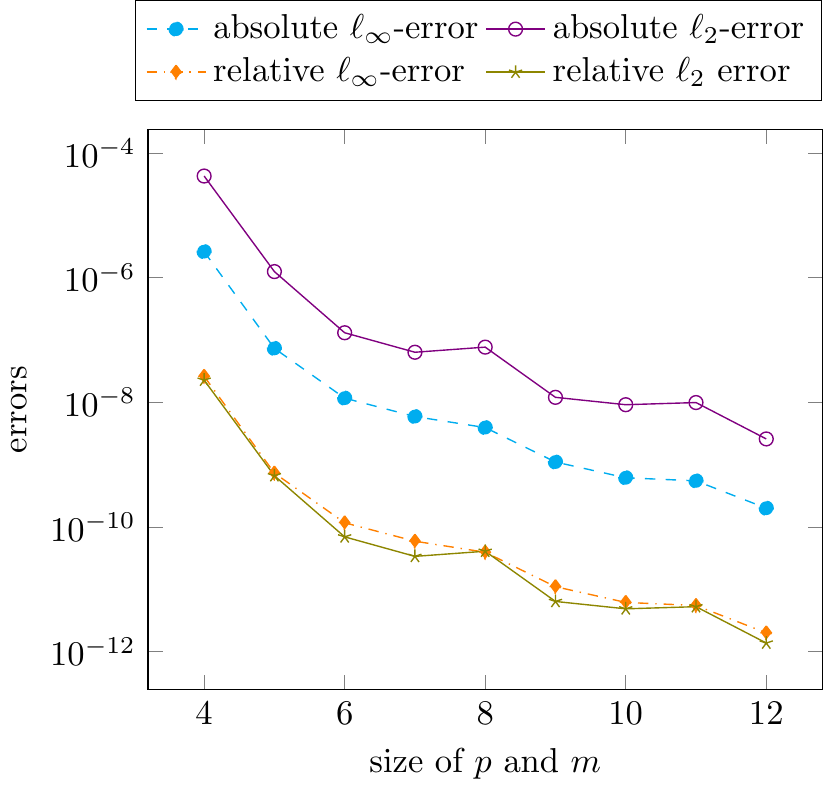}
		\caption{$N=1024$ and \\ $m=p=c,\, c=4,\dots,12$.}
	\end{subfigure}
	\caption{
		Comparison of the errors per node~\eqref{eq:errors_per_node_lagrange} of reconstructed Fourier coefficients generated by Algorithm~\ref{alg:inverse_nfft_lagrange_fast} for jittered equispaced nodes.
		\label{fig:errors_lagrange}}
\end{figure}
\ex
\end{Example}

\begin{Remark}
We have a look at the condition number of the nonequispaced Fourier matrix~$\b A$.
Figure~\ref{fig:condition_number} displays $\cond_2(\b A)=\|\b A\|_2 \|\b A^{-1}\|_2$ for different kinds of nodes for increasing \mbox{$N=2^c,\,c=1,\dots,10$}.
There we see that the distribution of the nonequispaced nodes is of great importance.
For jittered equispaced nodes, cf. \eqref{eq:jittered_nodes}, the condition number is nearly 1 for all sizes of $N$, so this problem is well-conditioned.
However, for Chebyshev nodes
\begin{equation}
\label{eq:tschebyscheff_nodes}
	y_j = \frac 12 \cos \left(\frac{2(N- j)+1}{2N}\ \pi \right), \quad j = 1, \dots, N,
\end{equation}
or logarithmically spaced nodes  
\begin{equation*}
\label{eq:log_nodes}
	y_j = \left(\frac 65 \right)^{j-N}, \quad j = 1, \dots, N,
\end{equation*}
it is easy to see that the condition number rises rapidly.
These last mentioned problems are simply ill-conditioned and we cannot assume a good approximation by Algorithm~\ref{alg:inverse_nfft_lagrange_fast}.
For a detailed investigation of the condition number for rectangular nonequispaced Fourier matrices see \cite{KuNa18} and the references therein and also \cite{BaGr03, BoPo06}.
\begin{figure}[!h]
	\centering
	\includegraphics[width=0.38\textwidth]{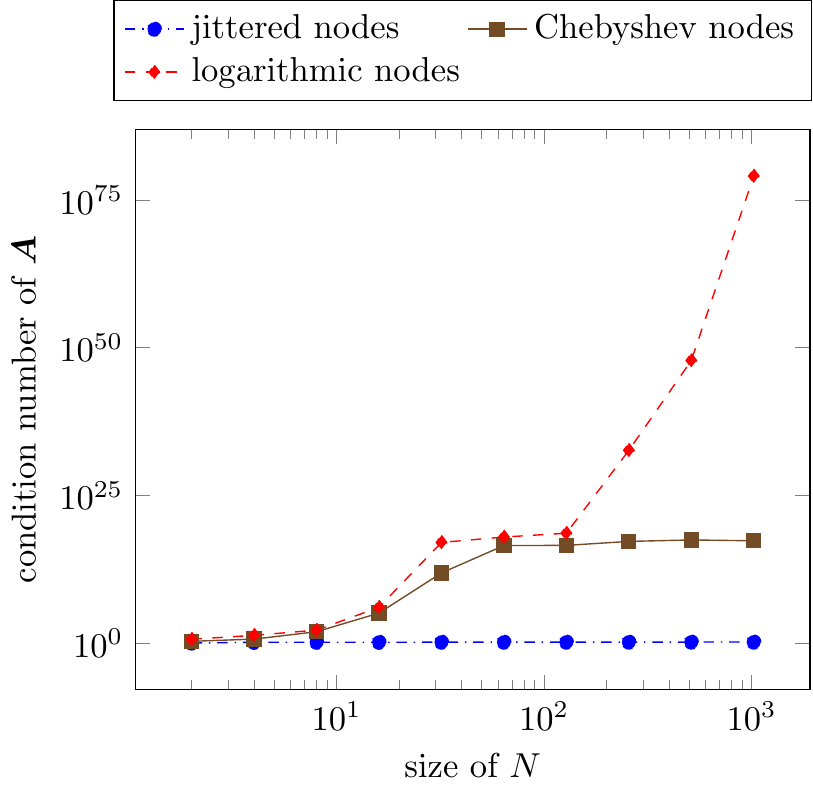}
	\caption{
		Comparison of condition numbers of the nonequispaced Fourier matrix $\b A$ computed using cond from MATLAB for different kinds of nodes for \mbox{$N=2^c,\, c=1,\dots,10$}.
		\label{fig:condition_number}}
\end{figure}
\ex
\end{Remark}

\begin{Remark}
We obtain an inverse adjoint NFFT by simply considering the adjoint of Algorithm~\ref{alg:inverse_nfft_lagrange_fast}, i.\,e., for $\b v \coloneqq (v_l)_{l=1}^N$ being the inverse Fourier transform of~$\b h$ we apply the formula
\begin{equation}
\label{eq:lagrange_formula_adjoint}
	f_j = b_j \sum_{l=1}^{N} a_l\, v_l \left(\frac{1}{\tan(\pi(x_l-y_j))}+\i\right), \quad j = 1,\dots, N.
\end{equation}
This relation can easily be seen by using the matrix representation of Algorithm~\ref{alg:inverse_nfft_lagrange_fast}.
\ex
\end{Remark}

\subsection{The rectangular case\label{subsec:rect}}

For the general case \mbox{$M\neq N$} we follow a different approach.
To clarify the idea we firstly have a look at equispaced nodes 
\begin{equation*}
	x_j = \tfrac jN \in \left[-\tfrac 12,\tfrac 12 \right),\, j = -\tfrac N2,\dots,\tfrac N2-1.
\end{equation*} 
Thereby, we obtain  
\begin{equation*}
	\b A = \left(\e^{2\pi\i k \frac jN}\right)_{j=-\frac N2,\, k=-\frac M2}^{\frac N2 -1,\; \frac M2 -1}
	\quad \text{ and } \quad
	\b A^* = \left(\e^{-2\pi\i k \frac jN}\right)_{k=-\frac M2,\,j=-\frac N2}^{\frac M2 -1,\;\frac N2 -1}.
\end{equation*}
Considering products of these two matrices it becomes apparent that
\mbox{$\b A^* \b A = N \b I_M$} for \mbox{$M \leq N$}
as well as
\mbox{$\b A \b A^* = M \b I_N$} for \mbox{$M\geq N$} with \mbox{$N \mid M$}.
This is to say, in these special cases we are given an inversion of the NFFT by composition of the Fourier matrices.
Hence, we seek to use this result to find a good approximation of the inversion in the general case.
This will be done by modification of the matrix~$\b B$ so that we receive an approximation of the form 
\mbox{$\b A \b D^* \b F^* \b B^* \approx M \b I_N$} similar to the equispaced case.
For this purpose, the entries of the matrix~$\b B$ should be calculated such that its sparse structure with at most \mbox{$(2m+1)$} entries per row and consequently the arithmetic complexity of the algorithms is preserved.
A matrix~$\b B$ satisfying this property we call \mbox{\textbf{$\textbf{(2m+1)}$-sparse}}.

It is to be noticed that the fact of underdetermination and overdetermination is not of great importance when deducing the methods for the inversion.
Even if it is a necessary condition for the exact inversion for equispaced nodes, the algorithms in the nonequispaced setting can always be used in both cases.
However, we will see later on that each algorithm works best in one of these cases and therefore they are already introduced for this special case.
Having this in mind we give an outline of how to handle problems~\eqref{eq:problem_infft} and \eqref{eq:problem_infft*}.
\begin{enumerate}
	\item[(1)] 
	To solve~\eqref{eq:problem_infft} our aim is to compute a sparse matrix~$\b B^*$ from given nodes~$x_j$ such that by application of an adjoint NFFT we obtain a fast inverse NFFT.	
	
	Suppose we are given the approximation \mbox{$\b A \b D^* \b F^* \b B^* \approx M \b I_N$}. 
	Then it also holds that 
	\begin{equation}
	\label{eq:approx_ansatz1}
		\frac 1M\ \b A \b D^* \b F^* \b B^* \b f \approx \b f \quad \forall \b f \in \C^N.
	\end{equation}
	If we now set
	\begin{equation*}
		\b{\check f} := \frac 1M\, \b D^* \b F^* \b B^* \b f,
	\end{equation*} 
	we can rewrite approximation~\eqref{eq:approx_ansatz1} as \mbox{$\b A \b{\check f} \approx \b f$}.
	Since we already know that \mbox{$\b A \b{\hat f} = \b f$} this means \mbox{$\b{\check f} \approx \b{\hat f}$}, which could be interpreted as a reconstruction of the Fourier coefficients~$\b{\hat f}$.
	To achieve a good approximation we want $\b{\check f}$ to be as close as possible by $\b{\hat f}$. 
	This can be accomplished by optimizing \mbox{$\b A \b{\check f} \approx \b f$}, i.\,e., we aim to solve the optimization problem
	\begin{equation*}
		\underset{\b{\check f}\in\C^M}{\text{Minimize }} \ 
		\|\b A \b{\check f} - \b f\|_2.
	\end{equation*}
	Using the definition of $\b{\check f}$ this norm can be estimated above by
	\begin{equation*}
		\|M \b A \b{\check f} - M \b f\|_2
		=
		\|\b A \b D^* \b F^* \b B^* \b f - M \b f\|_2 
		%		&=
		%		\|(\b A \b D^* \b F^* \b B^* - M \b I_N) \b f\|_2 \\
		\leq
		\|\b A \b D^* \b F^* \b B^* - M \b I_N\|_{\mathrm F}\, \|\b f\|_2,
	\end{equation*}
	where the Frobenius norm is denoted by \mbox{$\|\cdot\|_{\mathrm F}$}.
	Because $\b f$ is given, this expression can be minimized by solving 
	\begin{equation}
	\label{eq:min_norm_ansatz1}
		\underset{\b B \in \R^{N\times M_\sigma} \colon \b B\, (2m+1)\text{-sparse }}{\text{Minimize }} \ 
		\|\b A \b D^* \b F^* \b B^* - M \b I_N\|_{\mathrm F}^2.
	\end{equation}
	\item[(2)]
	To solve \eqref{eq:problem_infft*} we aim to compute a sparse matrix $\b B$ from given nodes $x_j$ such that by application of an NFFT we obtain a fast inverse adjoint NFFT.
	
	Again we suppose \mbox{$\b A \b D^* \b F^* \b B^* \approx M \b I_N$}, which is equivalent to its adjoint
	\begin{equation*}
		\b B \b F \b D \b A^* \approx M\, \b I_N \quad \text{and\ } \quad
		\frac 1M\, \b B \b F \b D\, (\b A^*  \b f) \approx \b f \quad \forall \b f \in \C^N,
	\end{equation*}
	respectively. 
	Because we know \mbox{$\b h = \b A^* \b f$},
	this could be interpreted as a reconstruction of the coefficients~$\b f$.
	To achieve a good approximation we solve the optimization problem 
	\begin{equation*}
		\underset{\b B \in \R^{N\times M_\sigma} \colon \b B\, (2m+1)\text{-sparse }}{\text{Minimize }} \ 
		\|\b B \b F \b D \b h - M \b f\|_2,
	\end{equation*}
	where the norm could be estimated as follows.
	\begin{equation*}
		\|\b B \b F \b D \b h - M \b f\|_2
		=
		\|\b B \b F \b D \b A^* \b f - M \b f\|_2 
		%		&=
		%		\|(\b B \b F \b D \b A^* - M \b I_N) \b f\|_2 \\
		\leq
		\|\b B \b F \b D \b A^* - M \b I_N\|_{\mathrm F}\, \|\b f\|_2.
	\end{equation*}
	Hence, we end up with the optimization problem
	\begin{equation*}
		\underset{\b B \in \R^{N\times M_\sigma} \colon \b B\, (2m+1)\text{-sparse }}{\text{Minimize }} \ 
		\|\b B \b F \b D \b A^* - M \b I_N\|_{\mathrm F}^2.
	\end{equation*}
\end{enumerate}
So, all in all, with the chosen approach we are able to generate an inverse NFFT as well as an inverse adjoint NFFT by modifying the matrices~$\b B^*$ and~$\b B$, respectively, and applying an adjoint NFFT or an NFFT with these modified matrices.

\begin{Remark}
We investigate below if the reconstruction error can be reduced by appropriate choice of the entries of the matrix~$\b B$. 
Already in~\cite{St01} the minimization of the Frobenius norm 
\mbox{$\|\b A - \b{BFD}\|_{\textrm F}$}
was analyzed regarding a sparse matrix~$\b B$ to achieve a minimum error for the NFFT.
In contrast, we study the minimization of  
\mbox{$\|\b A \b D^* \b F^* \b B^* - M \b I_N\|_{\textrm F}^2$}
to achieve a minimum error for the inverse NFFT
as well as the minimization of 
\mbox{$\|\b B \b F \b D \b A^* - M \b I_N\|_{\textrm F}^2$}
to achieve a minimum error for the inverse adjoint NFFT.
\ex
\end{Remark}

\subsubsection{Inverse NFFT -- underdetermined case\label{subsubsec:ansatz1}}

We start deducing our inversion as outlined in general.
However, in the numerical experiments in Examples~\ref{ex:matrixnorm_ansatz1} and~\ref{ex:trig_poly_ansatz1} we will see that this method is especially beneficial for the underdetermined setting and hence it is already attributed to this case.

As mentioned before we aim to find a solution for \eqref{eq:problem_infft} by solving~\eqref{eq:min_norm_ansatz1}.
Therefore, we consider the matrix 
\mbox{$\b A \b D^* \b F^* \b B^*$}
for given nodes \mbox{$x_j\in\T$},\, $j=1,\dots,N$. 
Apparently, we have
\begin{equation}
\label{eq:matrix_K}
	\b A \b D^* \b F^*
	=
	\left[
	\frac 1{M_\sigma} \sum_{k=-\frac M2}^{\frac M2 -1} 
	\frac{1}{\hat{w}(-k)}\, \e^{2\pi\i k \left(x_j-\frac{l}{M_\sigma}\right)}
	\right]_{j=1,\,l=-\frac{M_\sigma}{2}}^{N, \frac{M_\sigma}{2}-1}.
\end{equation}
By defining the ``inverse window function''
\begin{equation}
\label{eq:kernel}
	K(x) = \frac{1}{M_\sigma} \sum_{k=-\frac M2}^{\frac M2-1} \frac{1}{\hat w(-k)} \,\e^{2\pi\i k x}
\end{equation}
we receive
\begin{equation*}
	\b K 
	\coloneqq 
	\b A \b D^* \b F^*
	=
	\left( K \hspace{-2pt} \left(x_h-\tfrac{l}{M_\sigma}\right) \right)
	_{h=1,\, l=-\frac{M_\sigma}{2}}
	^{N,\ \frac{M_\sigma}{2}-1}.
\end{equation*}
Having a look at the matrix~$\b B^*$ it becomes apparent that there are only a few nonzero entries.
Thus, we study the window~$\tilde w_m$ for further simplification.
For $w_m$ we have 
\mbox{$\text{supp}(w_m) = \left[-\tfrac{m}{M_\sigma}, \tfrac{m}{M_\sigma}\right]$},
i.\,e., for the 1-periodic version \mbox{$\tilde w_m(x):=\sum_{z\in \mathbb Z} w_m(x+z)$} it holds
\begin{align*}
	\tilde w_m \hspace{-2.5pt} \left(x_j-\tfrac{l}{M_\sigma}\right) \neq 0 
	& \iff \exists\, z \in \mathbb Z:\ -m \leq M_{\sigma} x_j - l + M_{\sigma} z \leq m.
\end{align*}
By defining the set
\begin{equation}
\label{eq:set_xj}
	I_{M_{\sigma},m}(x_j) 
	\coloneqq 
	\left\{l \in \left\{-\tfrac{M_\sigma}{2}, \dots, \tfrac{M_\sigma}{2}-1 \right\}: 
	\exists\, z\in \mathbb Z\ \text{with} -m \leq M_{\sigma} x_j - l + M_{\sigma} z \leq m \right\}
\end{equation}
we can therefore write
\begin{align*}
	%	\left(\b K \b B^*\right)_{h,j} 
	%	&=
	\left(\b A \b D^* \b F^* \b B^*\right)_{h,j} \notag 
	&= 
	\sum_{k=-\frac M2}^{\frac M2 -1} 
	\e^{2\pi\i kx_h} \hspace{-3pt} 
	\left( 	
	\sum_{l \in I_{M_{\sigma},m}(x_j)} 
	\frac{1}{M_\sigma \hat{w}(-k)}\, 
	\e^{-2\pi\i k\frac{l}{M_\sigma}}\ 
	\tilde w_m \hspace{-2.5pt} \left(x_j - \tfrac l{M_{\sigma}}\right)
	\hspace{-3pt}\right).
\end{align*}
Hence, our considered norm can be written as
\begin{align}
\label{eq:norm_frobenius}
	\|\b A \b D^* &\b F^* \b B^* - M \b I_N\|_{\textrm F}^2 = \\
	&=
	\sum_{h=1}^{N} \sum_{j=1}^{N} 
	\left|\hspace{-3pt}
	\ \sum_{k=-\frac M2}^{\frac M2 -1} 
	\e^{2\pi\i kx_h} \hspace{-3pt}
	\left( 	
	\sum_{l \in I_{M_{\sigma},m}(x_j)} 
	\frac{1}{M_\sigma \hat{w}(-k)}\, 
	\e^{-2\pi\i k\frac{l}{M_\sigma}}\, 
	\tilde w_m \hspace{-2.5pt} \left(x_j - \tfrac l{M_{\sigma}}\right)
	\hspace{-3pt}\right)
	- M \delta_{hj}
	\right|^2. \notag  
\end{align}
Based on the definition of the Frobenius norm of a matrix~\mbox{$\b A \in \R^{k\times n}$} 
we obtain for $\b a_j$ being columns of~\mbox{$\b A \in \R^{k\times n}$} that
\begin{equation*}
	\|\b A\|_F^2 = \sum_{i=1}^{k} \sum_{j=1}^{n} |a_{ij}|^2 = \sum_{j=1}^{n} \|\b a_j\|_2^2.
\end{equation*}
This yields that \eqref{eq:norm_frobenius} can be rewritten by means of
\begin{equation*}
	\b T_j = \left(\e^{-2\pi\i k\frac{l}{M_\sigma}}\right)_{k=-\frac M2,\, l \in I_{M_{\sigma},m}(x_j)}^{\frac M2-1}, 
	\quad
	\b b_j = \bigg(\tilde w_m \hspace{-2.5pt} \left(x_j - \tfrac l{M_{\sigma}}\right)\bigg)
	_{l \in I_{M_{\sigma},m}(x_j)}
\end{equation*}
and \mbox{$\b e_j = (\delta_{hj})_{h=1}^N$} as
\begin{equation}
\label{eq:min_decomp_ansatz1}
	\|\b A \b D^* \b F^* \b B^* - M \b I_N\|_{\textrm F}^2 
	= \sum_{j=1}^N 
	\|\b A \b D^* \b T_j \b b_j - M \b e_j\|_2^2.
\end{equation}
Therefore, the considered norm in \eqref{eq:min_norm_ansatz1} is minimal if and only if 
\mbox{$\|\b A \b D^* \b T_ j \b b_j - M \b e_j\|_2^2$}
is minimal for all \mbox{$j=1,\dots, N$}.
Hence, we obtain the optimization problems 
\begin{equation}
\label{eq:opt_ansatz1}
	\underset{\tilde{\b b}_j \in \mathbb R^{2m+1}}{\text{Minimize }}\ 
	\|\b A \b D^* \b T_j \tilde{\b b}_j - M \b e_j\|_2^2, 
	\quad j=1,\dots,N,
\end{equation}
since the columns of the matrix~$\b B^*$ contain at most \mbox{$(2m+1)$} nonzeros.
Thus, if %the matrix
\begin{equation}
\label{eq:matrix_Kj}
	\hspace{-0.2cm} \b K_j
	\coloneqq
	\b A \b D^* \b T_j 
	=
	\left[
	\frac 1{M_\sigma} \sum_{k=-\frac M2}^{\frac M2 -1} 
	\frac{1}{\hat{w}(-k)}\, \e^{2\pi\i k \left(x_h-\frac{l}{M_\sigma}\right)}
	\right]_{h=1,\,l \in I_{M_{\sigma},m}(x_j)}^{N} \hspace{-35pt}\in \mathbb C^{N\times (2m+1)}
\end{equation} 
has full rank the solution of problem~\eqref{eq:opt_ansatz1} is given by
\begin{equation}
\label{eq:solution_ansatz1}
	\tilde{\b b}_j 
	= M
	\left(\b K_j^* \b K_j\right)^{-1} \b K_j^*  \b e_j, 
	\quad j = 1, \dots, N.
\end{equation}
For generating the modified matrix $\b B_{\mathrm{opt}}^*$ it must be pointed out that the vectors~$\tilde{\b b}_j$ only contain the \mbox{$(2m+1)$} nonzeros of the columns of $\b B_{\mathrm{opt}}^*$.
Hence, attention must be paid to the periodicity which can also be seen in the structure of the matrix~$\b B^*$.

\begin{Remark}
Whether the matrix~$\b K_j$ has full rank only depends on the matrix~$\b A$.
The conditions when this matrix has full rank can e.\,g. be found in \cite{Groechenig92} and~\cite{kupo04}.
\ex
\end{Remark}

Next we develop a method for the fast computation of $\tilde{\b b}_j$.
It is already known from Section~\ref{sec:nfft} that sums of the form~\eqref{eq:nfft}
can be computed in \mbox{$\mathcal O(M\log M + N)$} arithmetic operations for given nodes 
\mbox{$x_j \in \left[-\frac 12, \frac 12\right)$}, \mbox{$j=1,\dots,N,$} 
and coefficients 
\mbox{$\hat f_k \in \C,\, k=-\frac M2,\dots,\frac M2-1$}.
If we have a look at the matrix~$\b K_j$, cf.~\eqref{eq:matrix_Kj}, it becomes apparent that we can compute its entries by means of the NFFT with coefficients
\begin{equation}
\label{eq:fourier_coefficients_fast_ansatz1}
	\hat f_k = \frac{1}{M_\sigma \hat w(-k)},
	\quad
	k=-\tfrac M2,\dots,\tfrac M2-1,
\end{equation}
and nodes
\mbox{$y_{h,l} \coloneqq x_h -\tfrac{l}{M_\sigma},\, h=1,\dots,N, l \in I_{M_{\sigma},m}(x_j)$},
which are at most \mbox{$N(2m+1)$} many.
If we put the columns of $\b K_j$ one below the other into a vector, we are able to compute these entries only using one NFFT of length \mbox{$N(2m+1)$}.
In so doing, we have to reshape the obtained vector into a matrix afterwards.

Another point to mention is that the coefficients~$\hat f_k$ are the same for the computation of all matrices
$\b K_j$,$\, j=1,\dots,N$.
This is to say, we can precompute step 1 and step 2 of the NFFT since there only information about the Fourier coefficients~$\hat f_k$ is needed, cf.~\cite{postta01}.
Merely for the last step of the NFFT we need the current nodes and therefore this step has to be performed separately for every \mbox{$j=1,\dots,N$}.
Thus, we receive the following algorithm.

\begin{algorithm}{Fast optimization of the matrix $\b B^*$}
	\label{alg:opt_fast_ansatz1}
	For $N \in \N$ let $x_j \in \left[-\frac 12, \frac 12\right),\, j=1,\dots,N,$ be given nodes as well as $M \in 2\N$, $\sigma \geq 1$ and $M_\sigma = \sigma M$.
	Furthermore, we are given the oversampling factor $\sigma_2\geq 1$ and the cut-off parameter $m_2$ for an NFFT.
	\begin{enumerate}	
		\item Compute $\b g = \b F \b D \b{\hat f}$, cf.~\eqref{eq:matrix_D} and \eqref{eq:matrix_F}, with $\hat f_k$ in \eqref{eq:fourier_coefficients_fast_ansatz1}.
		\hfill $\mathcal O(M\log M)$
		\item
		For $j = 1, \dots, N$: 
		\begin{itemize}
			\item[] \hspace{-0.6cm} Determine the set $I_{M_{\sigma},m}(x_j)$, cf. \eqref{eq:set_xj}.
			\hfill $\mathcal O(1)$
			\item[] \hspace{-0.6cm} Perform $\b B \b g$, cf.~\eqref{eq:matrix_B}, for the vector of nodes
			\begin{equation*}
			\b y \coloneqq \left( \b y_1^T, \dots, \b y_s^T \right)^T
			\end{equation*} 
			\hspace{-0.6cm} for $\b y_n$ being the columns of the matrix 
			$\b Y \coloneqq (y_{h,l})_{h=1,\,l \in I_{M_{\sigma},m}(x_j)}^{N}$.
			\hfill $\mathcal O(N)$
			\item[] \hspace{-0.6cm} Reshape the obtained vector into the matrix $\b K_j \in \C^{N \times (2m+1)}$.
			\hfill $\mathcal O(N)$
			\item[] \hspace{-0.6cm} Solve the normal equations for \(\b K_j\), %i.\,e., compute $\tilde{\b b}_j$, 
			cf. \eqref{eq:solution_ansatz1}. \hfill $\mathcal O(N)$
		\end{itemize}
		\item Compose $\b B_{\mathrm{opt}}^*$ column-wise of the vectors $\tilde{\b b}_j$ observing the periodicity.
		\hfill $\mathcal O(N)$
	\end{enumerate}
	\vspace{2pt} 
	Output: optimized matrix $\b B_{\mathrm{opt}}^*$ 
	\vspace{2pt}\\
	Complexity: $\mathcal O(N^2 + M\log{M})$
\end{algorithm}

\begin{Remark}
\label{remark:Dirichlet_ansatz1}
It is possible to simplify Algorithm~\ref{alg:opt_fast_ansatz1} by replacing the inverse window function in~\eqref{eq:kernel} by the Dirichlet kernel
\begin{equation}
\label{eq:kernel_Dirichlet}
	D_{\frac M2-1}(x) 
	=
	\sum_{k=-\frac M2+1}^{\frac M2-1} \e^{2\pi\i kx}
	=
	\frac{\sin((M-1)\pi x)}{\sin(\pi x)}.
\end{equation}
%
%i.\,e., we set \mbox{$\hat w(k)=1$} for all \mbox{$k=-\frac M2+1, \dots, \frac M2-1,$} and the last nonzero entry \mbox{$\frac{1}{\hat w(\frac M2)}$} of the matrix~$\b D^*$ is set to zero.
%
Hence, the entries of $\b K_j$ in \eqref{eq:matrix_Kj} can explicitly be stated by means of \eqref{eq:kernel_Dirichlet} as
\begin{equation*}
	\b K_j
	= 
	\left[
	\frac 1{M_\sigma} \,
	D_{\frac M2-1}\left(x_h-\tfrac{l}{M_\sigma}\right)
	\right]_{h=1,\,l \in I_{M_{\sigma},m}(x_j)}^{N} 
\end{equation*}
and thereby the term \mbox{$M\log M$} in the computational costs of Algorithm~\ref{alg:opt_fast_ansatz1} is eliminated.
Thus, we end up with an arithmetic complexity of \mbox{$\mathcal O(N^2)$}.
\hfill \ex			
\end{Remark}

\begin{Remark}
This algorithm using a window function or the Dirichlet kernel is part of the software package NFFT 3.5.1, see \cite[./matlab/infft1d]{nfft3}.
\ex
\end{Remark}

Now we have a look at some numerical examples.

\begin{Example}
\label{ex:matrixnorm_ansatz1}
Firstly, we verify that the optimization was successful.
To this end, we compare the norms
\begin{equation}
\label{eq:matrixnorm_opt_ansatz1}
	\| \b A \b D^* \b F^* \b B^* - M \b I_N \|_{\textrm F}
	\quad\text{ and }\quad
	\| \b A \b D^* \b F^* \b B_{\mathrm{opt}}^* - M \b I_N \|_{\textrm F},
\end{equation}
where $\b B^*$ denotes the original matrix from the adjoint NFFT and $\b B_{\mathrm{opt}}^*$ the optimized matrix generated by Algorithm~\ref{alg:opt_fast_ansatz1}.
Even though our method is attributed to the underdetermined setting, this is not a restriction.
Hence, we also test for the overdetermined setting.
\begin{itemize}
	\item[(i)] 
	Firstly, we choose \mbox{$N=128$} jittered equispaced nodes, cf. \eqref{eq:jittered_nodes}, \mbox{$M=2^c$} with \mbox{$c=4,\dots,12,$} and for the NFFT in Algorithm~\ref{alg:opt_fast_ansatz1} we choose the Kaiser-Bessel window, \mbox{$\sigma_2=2.0$} and \mbox{$m_2=2m$} to receive high accuracy.
	Figure~\ref{fig:matrixnorm_Bspline_jittered_ansatz1} depicts the comparison of the norms \eqref{eq:matrixnorm_opt_ansatz1} for different values of $m$ and~$\sigma$ for $\b B_{\mathrm{opt}}^*$ generated using B-Splines as well as the Dirichlet kernel as mentioned in Remark~\ref{remark:Dirichlet_ansatz1}.
	It can be seen that the optimization was really successful for large values of~$M$ compared to~$N$ whereas the minimization does not work in the overdetermined setting \mbox{$M<N$}.
	But in fact, this is not surprising because then we try to approximate the identity by a low rank matrix since \mbox{$\b A \b D^* \b F^* \b B^* \in \C^{N\times N}$} has at most rank~$M$.
	Therefore, Algorithm~\ref{alg:opt_fast_ansatz1} is specially attributed to the underdetermined case.
	We recognize that the optimization is worsened against expectation by using a higher oversampling factor~$\sigma$
	whereas increasing the cut-off~$m$ expectedly leads to better results.
	
	Figure~\ref{fig:runtime_Dirichlet_ansatz1} displays the run-times of Algorithm~\ref{alg:opt_fast_ansatz1} comparing B-Spline and Dirichlet kernel for \mbox{$m=2$} and \mbox{$\sigma=1.0$}.
	It is obvious that the usage of the Dirichlet kernel considerably reduces the run-time. %for all sizes of~$M$.
	Since the results are the same for other parameters and window functions additional tests are omitted.
	\item[(ii)] 
	Next we repeat the example for Chebyshev nodes, cf. \eqref{eq:tschebyscheff_nodes}.
	The corresponding outcomes for \mbox{B-Splines} can be found in Figure~\ref{fig:matrixnorm_Bspline_tsch_ansatz1}.
	There we see that the gap between $M$ and~$N$ has to be huge to achieve results similar to those for jittered equispaced nodes.
	\ex
\end{itemize}
\begin{figure}[p]
	\centering
	\begin{subfigure}{0.31\textwidth}
		\includegraphics[width=0.9\textwidth]{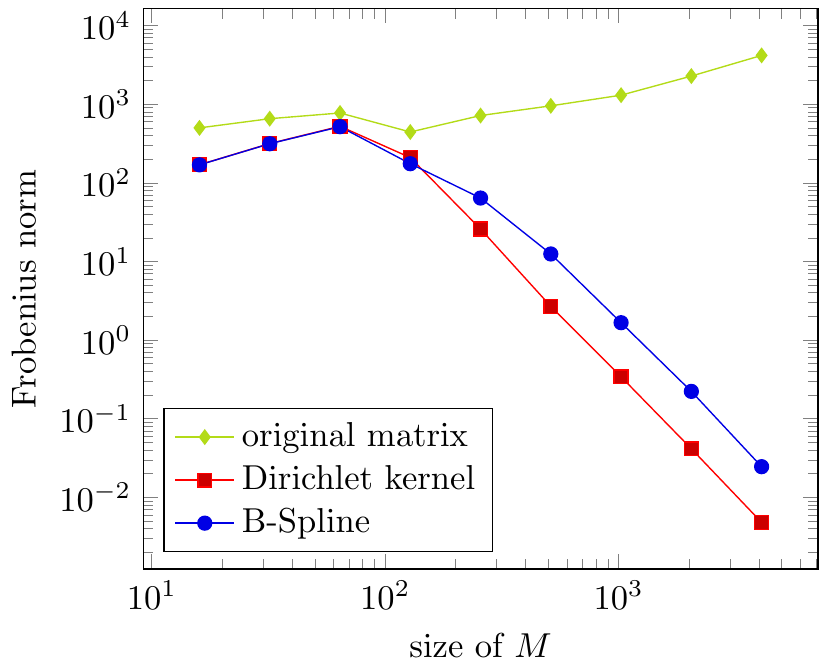}
		\caption{$m=2$ and $\sigma=1.0$}
	\end{subfigure}
	\begin{subfigure}{0.31\textwidth}
		\includegraphics[width=0.9\textwidth]{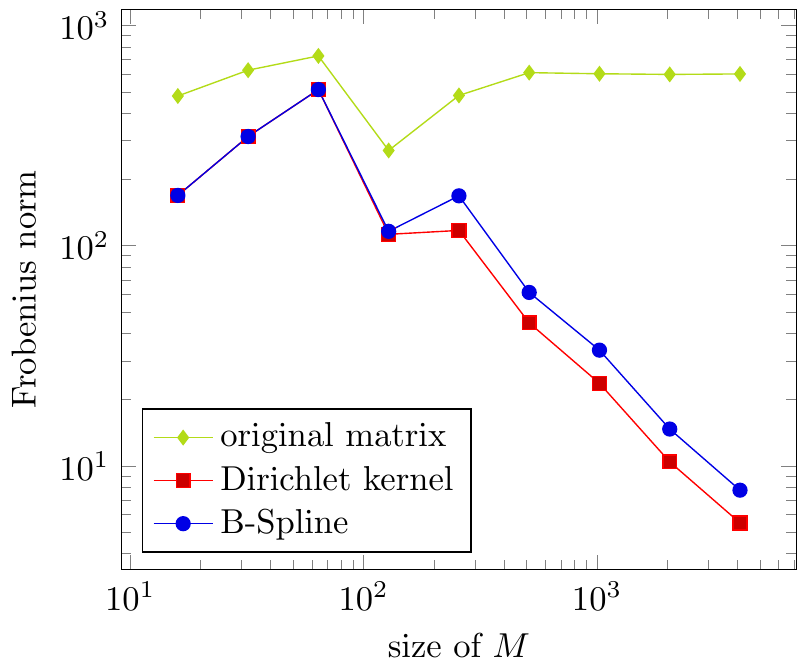}
		\caption{$m=2$ and $\sigma=2.0$}
	\end{subfigure}
	\begin{subfigure}{0.31\textwidth}
		\includegraphics[width=0.9\textwidth]{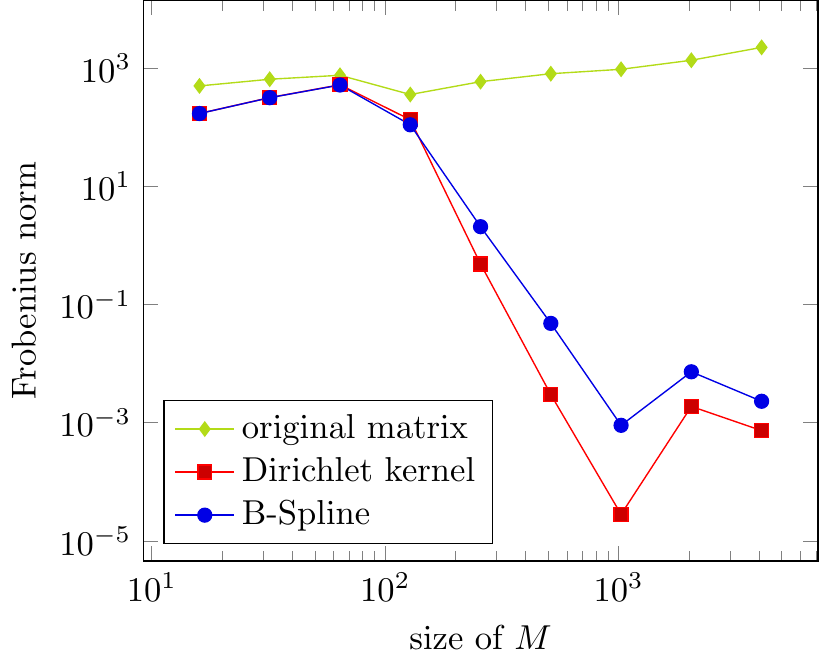}
		\caption{$m=4$ and $\sigma=1.0$}
	\end{subfigure}
	\caption{
		Comparison of Frobenius norms \eqref{eq:matrixnorm_opt_ansatz1} of the original matrix~$\b B^*$ and the optimized matrix~$\b B_{\mathrm{opt}}^*$ generated by Algorithm~\ref{alg:opt_fast_ansatz1} using \textbf{B-Spline} window functions as well as the Dirichlet kernel for \mbox{$N=128$} \textbf{jittered equispaced nodes} and \mbox{$M=2^c$} with \mbox{$c=4,\dots,12$}.
		\label{fig:matrixnorm_Bspline_jittered_ansatz1}}
\end{figure}
%
%\begin{figure}[p]
%	\centering
%	\begin{subfigure}{0.31\textwidth}
%		\includegraphics[width=0.9\textwidth]{ansatz1/matrixnorm_ansatz1_Bessel_m2_o1.pdf}
%		\caption{$m=2$ and $\sigma = 1.0$}
%	\end{subfigure}
%	\begin{subfigure}{0.31\textwidth}
%		\includegraphics[width=0.9\textwidth]{ansatz1/matrixnorm_ansatz1_Bessel_m2_o2.pdf}
%		\caption{$m=2$ and $\sigma = 2.0$}
%	\end{subfigure}
%	\begin{subfigure}{0.31\textwidth}
%		\includegraphics[width=0.9\textwidth]{ansatz1/matrixnorm_ansatz1_Bessel_m4_o1.pdf}
%		\caption{$m=4$ and $\sigma = 1.0$}
%	\end{subfigure}
%	\caption{
%		Comparison of Frobenius norms \eqref{eq:matrixnorm_opt_ansatz1} of the original matrix~$\b B$, the optimized matrix~$\b B_{\mathrm{opt}}$ generated by Algorithm~\ref{alg:opt_fast_ansatz1} using \textbf{Kaiser-Bessel} window functions as well as the optimized matrix~$\b B_{D}$ generated by Algorithm~\ref{alg:opt_fast_ansatz1} using the Dirichlet kernel for \mbox{$N=128$} \textbf{jittered equispaced nodes} and \mbox{$M=2^c$} with \mbox{$c=4,\dots,12$}.
%		\label{fig:matrixnorm_Bessel_jittered_ansatz1}}
%\end{figure}
%
\begin{figure}[p]
	\centering
	\includegraphics[width=0.38\textwidth]{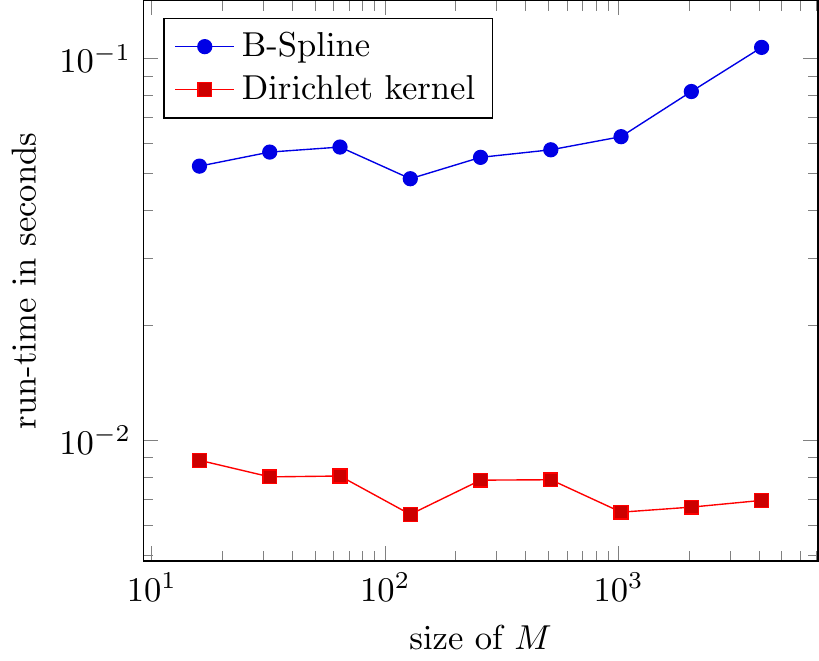}
	\caption{
		Comparison of run-times of Algorithm~\ref{alg:opt_fast_ansatz1} using the \mbox{\textbf{B-Spline}} as well as the Dirichlet kernel with \mbox{$m=2$} and \mbox{$\sigma=1.0$} for \mbox{$N=128$} \textbf{jittered equispaced nodes} and \mbox{$M=2^c$} with \mbox{$c=4,\dots,12$}.
		\label{fig:runtime_Dirichlet_ansatz1}}
\end{figure}
\begin{figure}[p]
	\centering
	\begin{subfigure}{0.31\textwidth}
		\includegraphics[width=0.9\textwidth]{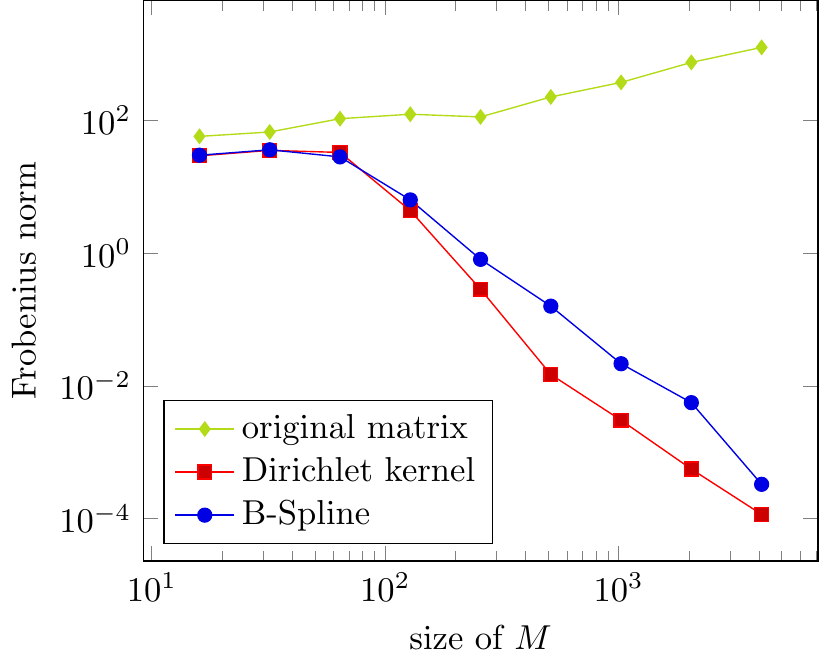}
		\caption{$N=16$}
	\end{subfigure}
	\begin{subfigure}{0.31\textwidth}
		\includegraphics[width=0.9\textwidth]{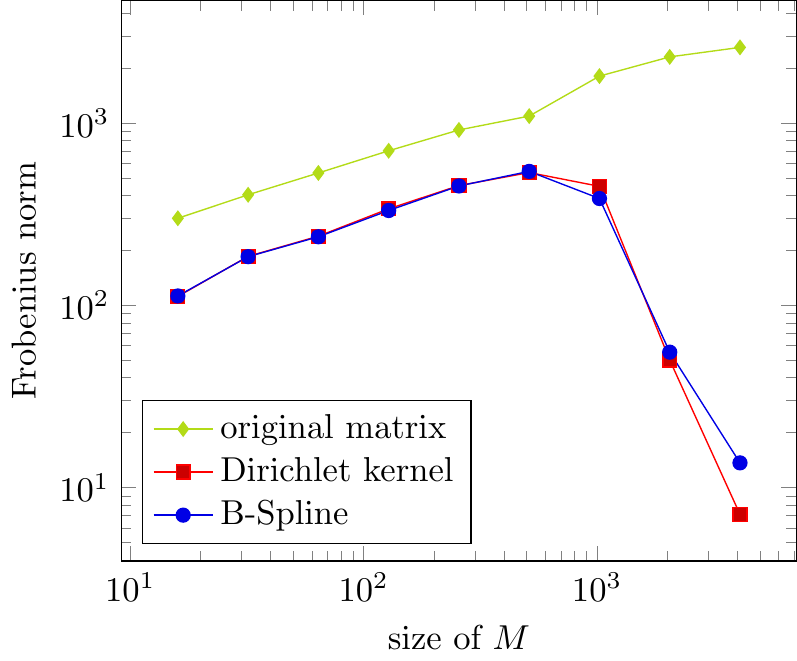}
		\caption{$N=64$}
	\end{subfigure}
	\begin{subfigure}{0.31\textwidth}
		\includegraphics[width=0.9\textwidth]{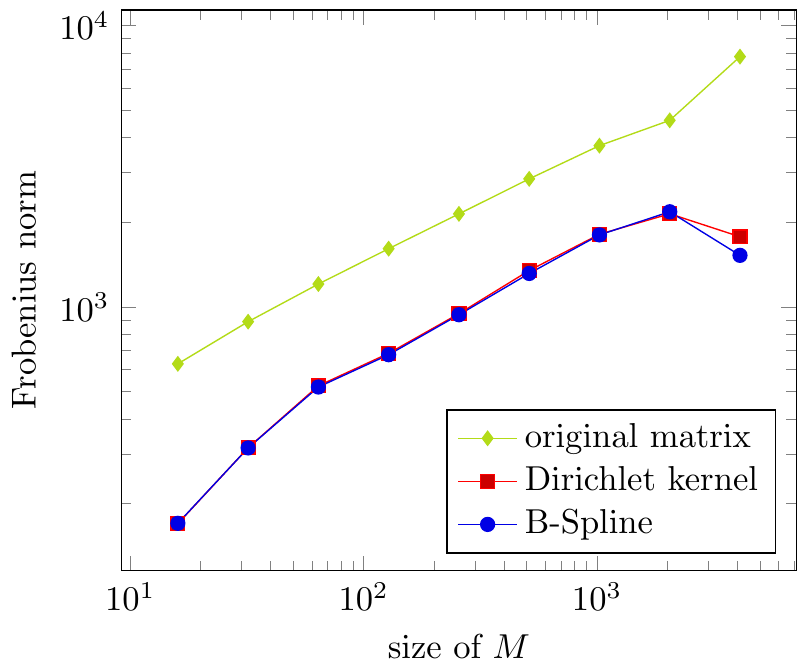}
		\caption{$N=128$}
	\end{subfigure}
	\caption{
		Comparison of Frobenius norms \eqref{eq:matrixnorm_opt_ansatz1} of the original matrix~$\b B^*$ and the optimized matrix~$\b B_{\mathrm{opt}}^*$ generated by Algorithm~\ref{alg:opt_fast_ansatz1} using \textbf{B-Spline} window functions as well as the Dirichlet kernel with \mbox{$m=2$} and \mbox{$\sigma=1.0$} for different numbers of \textbf{Chebyshev nodes} and \mbox{$M=2^c$} with \mbox{$c=4,\dots,12$}.
		\label{fig:matrixnorm_Bspline_tsch_ansatz1}}
\end{figure}
\end{Example}

\begin{Example}
\label{ex:trig_poly_ansatz1}
Secondly, we check if we can retrieve given Fourier coefficients from evaluations of the related trigonometric polynomial~\eqref{eq:trig_polynomial}, cf. Example~\ref{ex:trig_poly_lagrange}.
For \mbox{$\b f = \left(f(x_j)\right)_{j=1}^{N}$} with given nodes~$x_j$ we consider the estimate
\mbox{$\b{\check f} = \left( \check f_k \right)_{k=-\frac M2}^{\frac M2-1} = \frac 1M\, \b D^* \b F^* \b B_{\mathrm{opt}}^* \b f$},
where $\b B_{\mathrm{opt}}^*$ is the output of Algorithm~\ref{alg:opt_fast_ansatz1}.
Here we choose the Dirichlet kernel with \mbox{$\sigma=2.0$} and jittered equispaced nodes~$x_j$.
We consider the absolute and relative errors per node
\begin{equation}
\label{eq:errors_per_node_underdet}
	\frac{e_r^{\mathrm{abs}}}{N} = \frac 1N \|\b A\b{\check f}-\b f\|_{r}
	\quad\text{ and }\quad
	\frac{e_r^{\mathrm{rel}}}{N} = \frac{\|\b A\b{\check f}-\b f\|_{r}}{N\,\|\b f\|_{r}}
\end{equation}
for $r\in\{2,\infty\}$. 
As a first experiment we use \mbox{$N=2^c$} with \mbox{$c=1,\dots,12$}, \mbox{$M=4N$}, and \mbox{$m=4$}.
In a second experiment we fix \mbox{$N=512$} and \mbox{$M=2048$} and the cut-off parameter $m$ shall be chosen \mbox{$m=c$} with \mbox{$c=4,\dots,12$}.
The corresponding results are depicted in Figure~\ref{fig:errors_trig_poly_ansatz1}.
Having a look at the errors per node for growing $N$, see (a), we observe that the errors are worse if we consider very small sizes of $N$.
Otherwise, we recognize that these errors get smaller for large sizes of $N$.
In (b) we can see that for fixed $N$ these errors remain quite stable when tuning the cut-off parameter $m$.
\begin{figure}[h]
	\centering
	\captionsetup[subfigure]{justification=centering}
	\begin{subfigure}{0.4\textwidth}
			\includegraphics[width=0.85\textwidth]{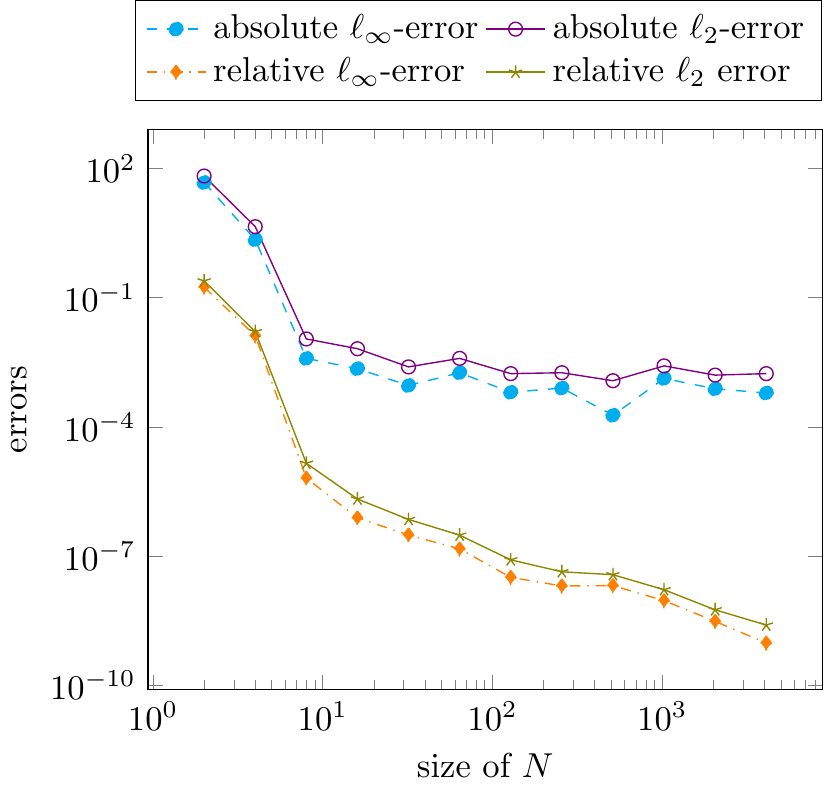}
			\caption{$N=2^c,\, c=1,\dots,12,$ \\ $M=4N$ and\, $m=4$.}
	\end{subfigure}
	\begin{subfigure}{0.4\textwidth}
			\includegraphics[width=0.85\textwidth]{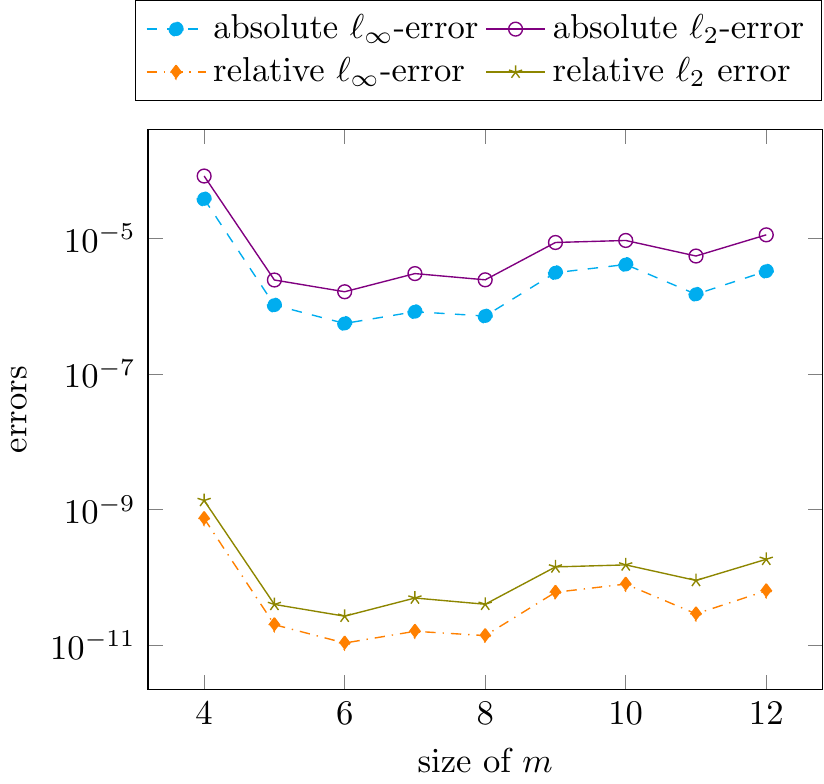}
			\caption{$N=512, M=2048$ \\ and $m=4,\dots,12$.}
	\end{subfigure}
	\caption{Comparison of the errors per node~\eqref{eq:errors_per_node_underdet} of reconstructed Fourier coefficients generated by Algorithm~\ref{alg:opt_fast_ansatz1} using the Dirichlet kernel and $\sigma=2.0$ for jittered equispaced nodes.
		\label{fig:errors_trig_poly_ansatz1}}
\end{figure}
\ex
\end{Example}

\begin{Remark}
\label{remark:ansatz1_adj}
The second problem~\eqref{eq:problem_infft*} can be solved similarly by searching an approximation of the form 
\mbox{$\b B \b K^* = \b B \b F \b D \b A^* \approx M \b I_N$}.
Therefore, we consider the optimization problem 
\begin{equation*}
	\underset{\b B \in \R^{N\times M_\sigma} \colon \b B\, (2m+1)\text{-sparse }}{\text{Minimize }}\ 
	\| \b B \b K^* - M \b I_N \|_{\textrm F}^2.
\end{equation*}
This is equivalent to the transposed problem
\begin{equation*}
	\underset{\b B \in \R^{N\times M_\sigma} \colon \b B\, (2m+1)\text{-sparse }}{\text{Minimize }}\ 
	\| \b K  \b B^* - M \b I_N \|_{\textrm F}^2,
\end{equation*}
which is what we discussed in Section~\ref{subsubsec:ansatz1} and hence can be solved likewise.
\ex
\end{Remark}

\begin{Example}
\label{ex:trig_poly_ansatz1_adj}
Finally, we discuss the analogs of the examples mentioned above for problem~\eqref{eq:problem_infft*}.
Since it is clear that the optimization problems are equivalent we refer to Example~\ref{ex:matrixnorm_ansatz1} for results with respect to the minimization of the norm. 
Similarly to Example~\ref{ex:trig_poly_ansatz1}, we check if we are able to perform an inverse adjoint NFFT for a trigonometric polynomial~\eqref{eq:trig_polynomial}.
This time we consider the estimate
\mbox{$\b{\tilde f} = \left( \tilde f_j \right)_{j=1}^{N} = \frac 1M\, \b B_{\mathrm{opt}} \b F \b D (\b A^* \b f)$}
of the function values \mbox{$\b f = \left(f(x_j)\right)_{j=1}^{N} = \left(f_j\right)_{j=1}^{N}$}, where $\b B_{\mathrm{opt}}$ is the adjoint of the output of Algorithm~\ref{alg:opt_fast_ansatz1}.
%Again, we choose the Dirichlet kernel, $\sigma=2.0$ and jittered equispaced nodes~$x_j$.
We consider the absolute and relative errors per node
\begin{equation}
\label{eq:errors_per_node_adjoint}
	\frac{e_r^{\mathrm{abs}}}{N} = \frac 1N \|\b{\tilde f}-\b f\|_{r}
	\quad\text{ and }\quad
	\frac{e_r^{\mathrm{rel}}}{N} = \frac{\|\b{\tilde f}-\b f\|_{r}}{N\,\|\b f\|_{r}}
\end{equation}
for $r\in\{2,\infty\}$ and perform the same experiments as in Example~\ref{ex:trig_poly_ansatz1}.
The corresponding results can be found in Figure~\ref{fig:errors_trig_poly_ansatz1_adj}.
There we see quite the same behavior in (a), whereas in (b) the errors get even worse when increasing the cut-off parameter $m$.
\begin{figure}[h]
	\centering
	\captionsetup[subfigure]{justification=centering}
	\begin{subfigure}{0.4\textwidth}
		\includegraphics[width=0.85\textwidth]{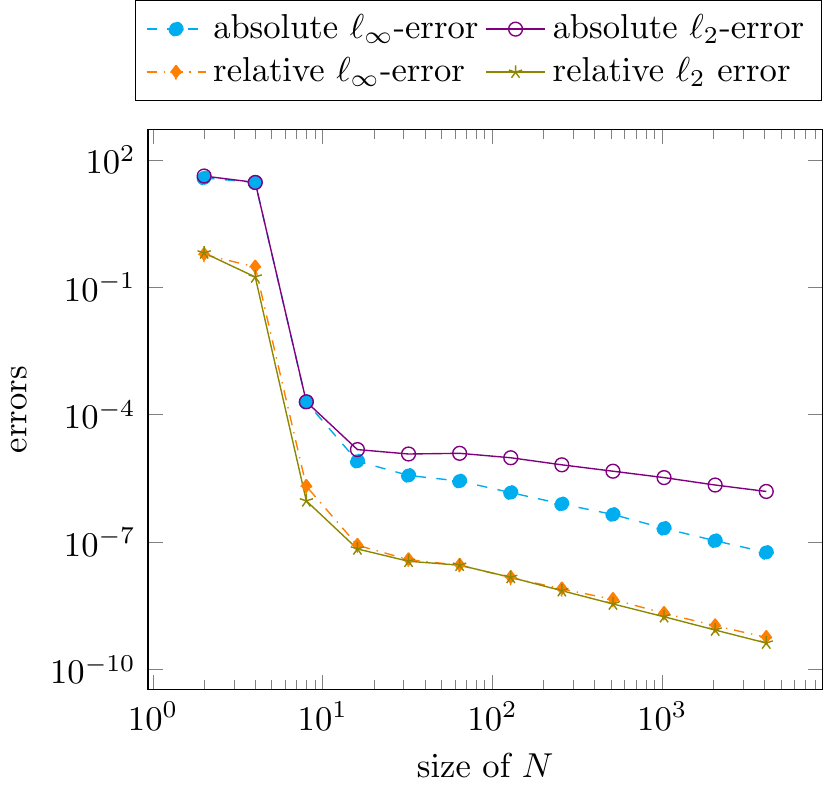}
		\caption{$N=2^c,\, c=1,\dots,12,$ \\ $M=4N$ and\, $m=4$.}
	\end{subfigure}
	\begin{subfigure}{0.4\textwidth}
		\includegraphics[width=0.85\textwidth]{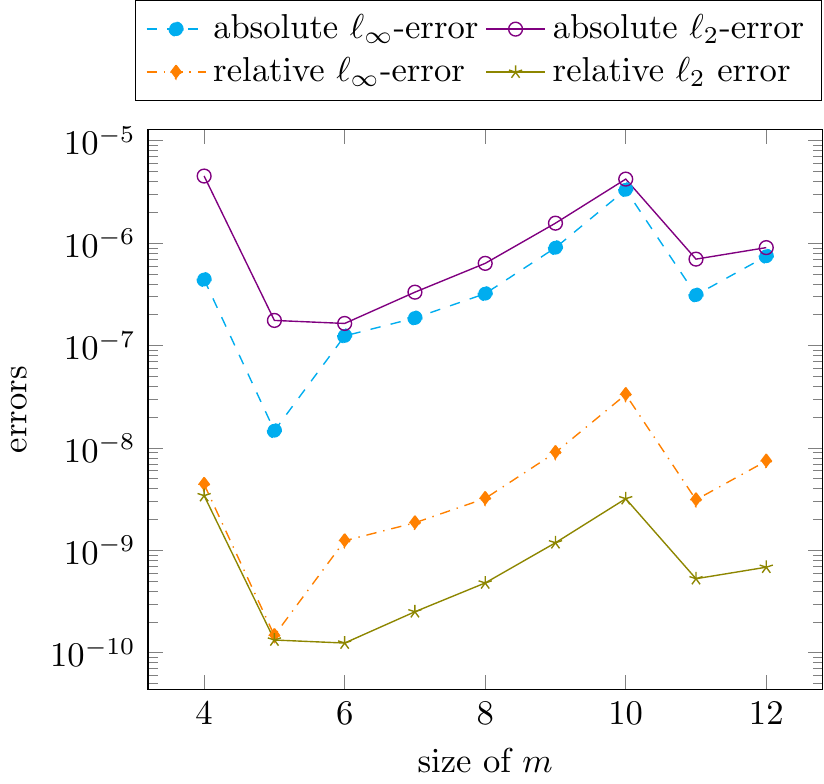}
		\caption{$N=512, M=2048$ \\ and $m=4,\dots,12$.}
	\end{subfigure}
	\caption{Comparison of the errors per node~\eqref{eq:errors_per_node_lagrange} of reconstructed function values generated by Algorithm~\ref{alg:opt_fast_ansatz1} using the Dirichlet kernel and $\sigma=2.0$ for jittered equispaced nodes.
		\label{fig:errors_trig_poly_ansatz1_adj}}
\end{figure}
\ex
\end{Example}

\subsubsection{Inverse NFFT -- overdetermined case\label{subsubsec:ansatz2}}

Previously, in Section~\ref{subsubsec:ansatz1} we studied \mbox{$\b K \b B^*$} and \mbox{$\b B \b K^*$}, where 
\mbox{$\b K = \b A \b D^* \b F^* \in \C^{N\times M_\sigma}$}
and
\mbox{$\b B \in \R^{N \times M_\sigma}$}\!.
There we have seen that the inversion based on the minimization related to these matrices works best for \mbox{$M>N$}, which is the underdetermined case for the inverse NFFT as well as the overdetermined case for the inverse adjoint NFFT.
However, often we are given nonequispaced samples with \mbox{$M<N$} and search a corresponding trigonometric polynomial of degree~$M$.
Hence, we look for another approach which yields the best results in this overdetermined setting \mbox{$M<N$}.
%Similar to the explanations at the beginning of Section~\ref{subsubsec:ansatz1} this new approach is desired to lead to substantial improvements in the overdetermined case.
%
To this end, we investigate \mbox{$\b B^* \b K$}.

Initially, we consider the function 
\mbox{$\tilde g(x) = \sum_{j=1}^{N} f_j \, \tilde w_m(x_j-x)$}.
%Evaluation at equispaced points 
%\mbox{$x=\frac{l}{M_\sigma}$},\, \mbox{$l=-\frac{M_\sigma}{2}, \dots, \frac{M_\sigma}{2}-1$}, 
%yields
%%
%\begin{equation}
%\label{eq:gtilde}
%	\tilde g \hspace{-2.5pt} \left(\tfrac{l}{M_\sigma}\right)
%	=
%	\sum_{j=1}^{N} f_j \, \tilde w_m \hspace{-2.5pt} \left(x_j - \tfrac l{M_{\sigma}}\right).
%\end{equation}
%
Using this we can represent the vector 
\mbox{$\tilde{\b g} \coloneqq \left( \tilde g \hspace{-2.5pt} \left(\tfrac{l}{M_\sigma}\right) \right)_{l=-\frac{M_\sigma}{2}}^{\frac{M_\sigma}{2}-1}$}
%\eqref{eq:gtilde} can be expressed in matrix-vector notation as
by
\mbox{$\tilde{\b g} = \b B^* \b f$}.
Furthermore, we know by \eqref{eq:approx_nfft*} that the adjoint NFFT can be written as 
\mbox{$(\tilde h_k)_{k=-\frac M2}^{\frac M2-1} \eqqcolon \tilde{\b h} = \b D^* \b F^* \b B^* \b f$}
and thereby we have
\mbox{$\tilde{\b h} = \b D^* \b F^* \tilde{\b g}$}.
Now we claim \mbox{$\tilde{\b h} \overset{!}\approx \b{\hat f}$}.
Thus, it follows
\begin{equation*}
	\tilde{\b g} 
	%	\overset{\eqref{eq:gtilde}}
	= 
	\b B^* \b f
	=
	\b B^* \b A \b{\hat f}
	\overset{!}\approx
	\b B^* \b A \tilde{\b h}
	=
	\b B^* \b A \b D^* \b F^* \tilde{\b g}
	=
	\b B^* \b K \tilde{\b g},
\end{equation*}
i.\,e., we seek $\b B^*$ as the solution of the optimization problem
\begin{equation*}
	\underset{\b B \in \R^{N\times M_\sigma} \colon \b B\, (2m+1)\text{-sparse }}{\text{Minimize }}\ 
	\left\| \b B^* \b K - \b I_{M_\sigma} \right\|_{\mathrm F}^2.
\end{equation*}
This is equivalent to the transposed problem
\begin{equation}
\label{eq:opt_ansatz2}
	\underset{\b B \in \R^{N\times M_\sigma} \colon \b B\, (2m+1)\text{-sparse }}{\text{Minimize }}\ 
	\left\| \b K^* \b B - \b I_{M_\sigma} \right\|_{\mathrm F}^2.
\end{equation}
By means of definitions~\eqref{eq:matrix_K} and~\eqref{eq:matrix_B} we obtain
\begin{equation}
\label{eq:matrix_B_Phi_T}
	\b K^* \b B 
	=
	\b F \b D \b A^* \b B 
	=
	\left[ 
	\sum_{j=1}^{N} 
	\sum_{k=-\frac M2}^{\frac M2-1} 
	\frac{1}{M_\sigma \hat w(k)} \, \e^{-2\pi\i k \left(x_j-\frac{s}{M_\sigma}\right)} \, 
	\tilde w_m \hspace{-2.75pt} \left(x_j-\tfrac{l}{M_{\sigma}}\right)
	\right]
	_{s,\,l=-\frac{M_\sigma}{2}}
	^{\frac{M_\sigma}{2}-1}.
\end{equation}
Analogously to~\eqref{eq:set_xj}, we define the set
\begin{equation}
\label{eq:set_l}
	I_{M_{\sigma},m}(l) 
	:= 
	\left\{j \in \left\{1, \dots, N \right\}: 
	\exists\, z\in \mathbb Z\ \text{with} -m \leq M_{\sigma} x_j - l + M_{\sigma} z \leq m \right\}.
\end{equation}
Hence, we can rewrite \eqref{eq:opt_ansatz2} by analogy with Section~\ref{subsubsec:ansatz1} as
\begin{equation*}
	\left\| \b F \b D \b A^* \b B - \b I_{M_\sigma} \right\|_{\mathrm F}^2 
	=
	\sum_{l=-\frac{M_\sigma}{2}}^{\frac{M_\sigma}{2}-1} \|\b F \b D \b H_l \b b_l - \b e_l\|_2^2,
\end{equation*}
where 
\begin{equation*}
	\b b_l \coloneqq \bigg( \tilde w_m \hspace{-2.5pt} \left(x_j - \tfrac l{M_{\sigma}}\right) \bigg)
	_{j\in I_{M_{\sigma},m}(l)},
	\quad 
	\b H_l \coloneqq \left( \e^{-2\pi\i k x_j} \right)_{k=-\frac M2,\, j\in I_{M_{\sigma},m}(l)}^{\frac M2-1}
\end{equation*}
and $\b e_l$ denote the columns of the identity matrix~$\b I_{M_\sigma}$.
We obtain, cf.~\eqref{eq:matrix_Kj}, 
\begin{equation}
\label{eq:matrix_Ll}
	\b L_l
	\coloneqq
	\b F \b D \b H_l
	=
	\left[
	\frac 1{M_\sigma} \sum_{k=-\frac M2}^{\frac M2 -1} 
	\frac{1}{\hat{w}(k)}\, \e^{2\pi\i k \left(\frac{s}{M_\sigma}-x_j\right)}
	\right]_{s=-\frac{M_\sigma}{2},\,j \in I_{M_{\sigma},m}(l)}^{\frac{M_\sigma}{2}-1}
	\hspace{-40pt}\in \C^{M_\sigma \times |I_{M_{\sigma},m}(l)|}.
\end{equation} 
Thereby we receive the optimization problems
\begin{equation*}
	\underset{\tilde{\b b}_l \in \R^{2m+1}}{\text{Minimize }}\ 
	\|\b L_l \tilde{\b b}_l - \b e_l\|_2^2, 
	\quad l=-\tfrac{M_\sigma}{2},\dots,\tfrac{M_\sigma}{2}-1.
\end{equation*}
If the matrix
\mbox{$\b L_l \in \C^{M_\sigma \times |I_{M_{\sigma},m}(l)|}$} 
has full rank the solution of~\eqref{eq:opt_ansatz2} is given by
\begin{equation}
\label{eq:solution_ansatz2}
	\tilde{\b b}_l 
	= 
	\left(
	\b L_l ^* 
	\b L_l 
	\right)^{-1}
	\b L_l ^*
	\b e_l, 
	\quad l=-\tfrac{M_\sigma}{2},\dots,\tfrac{M_\sigma}{2}-1.
\end{equation}
This time we cannot tell anything about the dimensions of~$\b L_l$ in general since the size of the set~\mbox{$I_{M_{\sigma},m}(l)$} depends on several parameters.
Having these vectors~$\tilde{\b b}_l$ we can compose the modified matrix~$\b B_{\mathrm{opt}}$, observing that $\tilde{\b b}_l$ only consist of the nonzero entries of $\b B_{\mathrm{opt}}$.
Then the approximation of the Fourier coefficients is given by
\begin{equation}
\label{eq:approx_ansatz2}
	\b{\hat f} 
	\approx
	\tilde{\b h}
	%	= 
	%	\b D^* \b F^* \tilde{\b g}
	%	\overset{\eqref{eq:gtilde}}
	=
	\b D^* \b F^* \b B_{\mathrm{opt}}^* \b f.
\end{equation}
In other words, this approach yields another way to invert the NFFT by also modifying the adjoint NFFT.

Analogously to Section~\ref{subsubsec:ansatz1}, we are able to compute the entries of the matrix~$\b L_l$, see \eqref{eq:matrix_Ll}, by means of an NFFT with the $M$ coefficients
\begin{equation}
\label{eq:fourier_coefficients_fast_ansatz2}
	\hat f_k = \frac{1}{M_\sigma \hat w(k)},
	\quad
	k=-\tfrac M2,\dots,\tfrac M2-1,
\end{equation}
and nodes
\mbox{$y_{s,j} \coloneqq \tfrac{s}{M_\sigma} - x_j,\, s = -\tfrac{M_\sigma}{2},\dots,\tfrac{M_\sigma}{2}-1, \, j \in I_{M_{\sigma},m}(l),$}
which are at most \mbox{$M_\sigma N$} many.
Here we also require only one NFFT by writing the columns of $\b L_l$ one below the other.
The obtained vector including all entries of $\b L_l$ has to be reshaped afterwards.
This leads to the following algorithm.

\begin{algorithm}{Fast optimization of the matrix $\b B$}
	\label{alg:opt_fast_ansatz2}
	For $N \in \N$ let $x_j \in \left[-\frac 12, \frac 12\right), j=1,\dots,N,$ be given nodes as well as $M \in 2\N$, $\sigma \geq 1$ and $M_\sigma = \sigma M$.
	\begin{enumerate}	
		\item Compute $\b g = \b F \b D \b{\hat f}$, cf.~\eqref{eq:matrix_D} and \eqref{eq:matrix_F}, with $\hat f_k$ in \eqref{eq:fourier_coefficients_fast_ansatz2}.
		\hfill $\mathcal O(M\log M)$
		\item
		For $l = -\frac{M_\sigma}{2}, \dots, \frac{M_\sigma}{2}-1$: 
		\begin{itemize}
			\item[] \hspace{-0.6cm} Determine the set $I_{M_{\sigma},m}(l)$, cf. \eqref{eq:set_l}.
			\hfill $\mathcal O(N)$
			\item[] \hspace{-0.6cm} Perform $\b B \b g$, cf.~\eqref{eq:matrix_B}, for the vector of nodes
			\begin{equation*}
			\b y \coloneqq \left( \b y_1^T, \dots, \b y_s^T \right)^T
			\end{equation*} 
			\hspace{-0.6cm} for $\b y_n$ being the columns of the matrix $\b Y \coloneqq (y_{l,j})_{l=-\frac{M_\sigma}{2},\, j \in I_{M_{\sigma},m}(l)}^{\frac{M_\sigma}{2}-1}$.
			\hfill $\mathcal O(N)$
			\item[] \hspace{-0.6cm} Reshape the obtained vector 
			into the matrix $\b L_l \in \C^{M_\sigma \times |I_{M_{\sigma},m}(l)|}$.
			\hfill $\mathcal O(N)$
			\item[] \hspace{-0.6cm} Solve the normal equations for \(\b L_l\), %i.\,e., compute $\tilde{\b b}_l$, 
			cf. \eqref{eq:solution_ansatz2}. \hfill \mbox{$\mathcal O(N^3+N^2 M)$}
		\end{itemize}
		\item Compose $\b B_{\mathrm{opt}}$ column-wise of the vectors $\tilde{\b b}_l$ observing the periodicity.
		\hfill $\mathcal O(M)$
	\end{enumerate}
	\vspace{2pt} 
	Output: optimized matrix $\b B_{\mathrm{opt}}$ 
	\vspace{2pt} \\
	Complexity: $\mathcal O(N^2 M^2 + N^3 M)$
\end{algorithm}

\begin{Remark}
\label{remark:alg:uniform_nodes}
If we assume the nodes are somewhat uniformly distributed, like for instance jittered equispaced nodes, we can get rid of the complexity related to $N$ and end up with arithmetic costs of \mbox{$\mathcal O(M^2)$}.
\ex
\end{Remark}

\begin{Remark}
\label{remark:Dirichlet_ansatz2}
It is also possible to simplify the computation of~$\b L_l$ by incorporating the Dirichlet kernel~\eqref{eq:kernel_Dirichlet}, i.\,e., we set \mbox{$\hat w(k)=1$} for all \mbox{$k=-\frac M2+1, \dots, \frac M2-1,$} and the last nonzero entry \mbox{$\frac{1}{\hat w(\frac M2)}$} of the matrix~$\b D^*$ is set to zero.
Hence, the entries of the matrix 
\begin{equation*}
	\b L_l
	= 
	\left[
	\frac 1{M_\sigma} \,
	D_{\frac M2-1}\left(\tfrac{l}{M_\sigma}-x_j\right)
	\right]_{l=-\frac{M_\sigma}{2},\,j \in I_{M_{\sigma},m}(l)}^{\frac{M_\sigma}{2}-1}
\end{equation*}
can explicitly be stated and therefore the term \mbox{$M\log M$} in the computational costs of Algorithm~\ref{alg:opt_fast_ansatz2} can be eliminated.
Nevertheless, even if we assume uniformly distributed nodes as in Remark~\ref{remark:alg:uniform_nodes}, we remain with arithmetic costs of \mbox{$\mathcal O(M^2)$}.
\hfill \ex
\end{Remark}

\begin{Remark}
This algorithm using a window function or the Dirichlet kernel is part of the software package NFFT 3.5.1, see \cite[./matlab/infft1d]{nfft3}.
\ex
\end{Remark}

\begin{Example}
\label{ex:matrixnorm_ansatz2}
As in Example \ref{ex:matrixnorm_ansatz1} we verify at first that the optimization was successful.
On that account, we compare the norms
\begin{equation}
\label{eq:matrixnorm_opt_ansatz2}
	\left\| \b F \b D \b A^* \b B - \b I_{M_\sigma} \right\|_{\mathrm F}
	\quad\text{ and }\quad
	\| \b F \b D \b A^* \b B_{\mathrm{opt}} - \b I_{M_\sigma} \|_{\mathrm F},
\end{equation}
where $\b B$ denotes the original matrix from the NFFT and $\b B_{\mathrm{opt}}$ the optimized matrix generated by Algorithm~\ref{alg:opt_fast_ansatz2}.
Although our method is attributed to the overdetermined setting, this again means no restriction.
Therefore, also the underdetermined setting is tested.
\begin{itemize}
	\item[(i)] 
	Again we examine at first jittered equispaced nodes, see~\eqref{eq:jittered_nodes}.
	We choose \mbox{$M=128$} and consider the norms~\eqref{eq:matrixnorm_opt_ansatz2} for \mbox{$N=2^c$} nodes with \mbox{$c=2,\dots,14$}.
	In order to compute the NFFT in Algorithm~\ref{alg:opt_fast_ansatz2} we choose the Kaiser-Bessel window, an oversampling of \mbox{$\sigma_2=2.0$} and the cut-off parameter~\mbox{$m_2=2m$} to achieve results comparable to Example~\ref{ex:matrixnorm_ansatz1}.
	However, one could also choose a larger cut-off to receive more accuracy for growing~$N$.
	
	In Figure~\ref{fig:matrixnorm_Bspline_jittered_ansatz2} one can find the comparison of the norms~\eqref{eq:matrixnorm_opt_ansatz2} for different values of $m$ and $\sigma$ for $\b B_{\mathrm{opt}}$ generated using B-Splines as well as the Dirichlet kernel mentioned in Remark~\ref{remark:Dirichlet_ansatz2}. 
	It can be seen that for \mbox{$\sigma=1.0$} the minimization was very successful especially for large $N$ compared to~$M$.
	For \mbox{$N<M$} the minimization was not successful.
	Similarly to Example~\ref{ex:matrixnorm_ansatz1}, this results from the fact that the corresponding matrix \mbox{$\b F \b D \b A^* \b B$} is of low rank.
	Therefore, Algorithm~\ref{alg:opt_fast_ansatz2} is specially attributed to the overdetermined case.
	
	Having a look at the graphs with high oversampling we recognize that the norms of the optimized matrices remain stable for all sizes of~$N$.
	Thus, also for this method the optimization seems not to work for high oversampling.
	
	While the computational costs could not be scaled down, we see in Figure~\ref{fig:runtime_Dirichlet_ansatz2} that using the Dirichlet kernel reduced the run-time for all sizes of~$N$.
	Results for other window functions are omitted since they show the same behavior.	
	\item[(ii)] 
	Next we repeat the example using Chebyshev nodes, cf.~\eqref{eq:tschebyscheff_nodes}.
	The cor\-res\-pon\-ding results for B-Splines can be found in Figure~\ref{fig:matrixnorm_Bspline_tsch_ansatz2}.
	We recognize that these graphs look similar to Figure~\ref{fig:matrixnorm_Bspline_jittered_ansatz2}, in contrast to Example~\ref{ex:matrixnorm_ansatz1} where the optimization for Chebyshev nodes was quite difficult. 
	\ex
\end{itemize}
\begin{figure}[p]
	\centering
	\begin{subfigure}{0.31\textwidth}
		\includegraphics[width=0.9\textwidth]{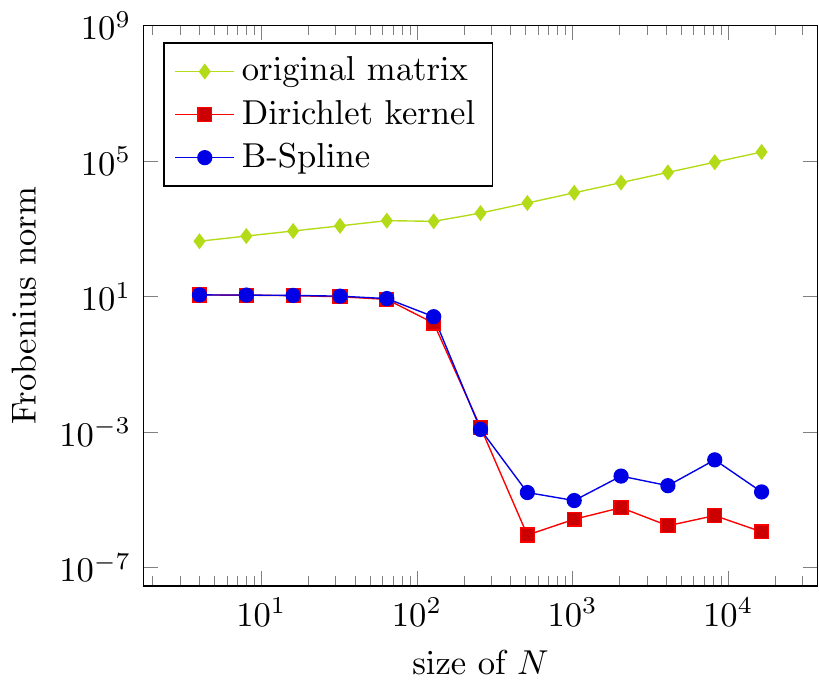}
		\caption{$m=2$ and $\sigma=1.0$}
	\end{subfigure}
	\begin{subfigure}{0.31\textwidth}
		\includegraphics[width=0.9\textwidth]{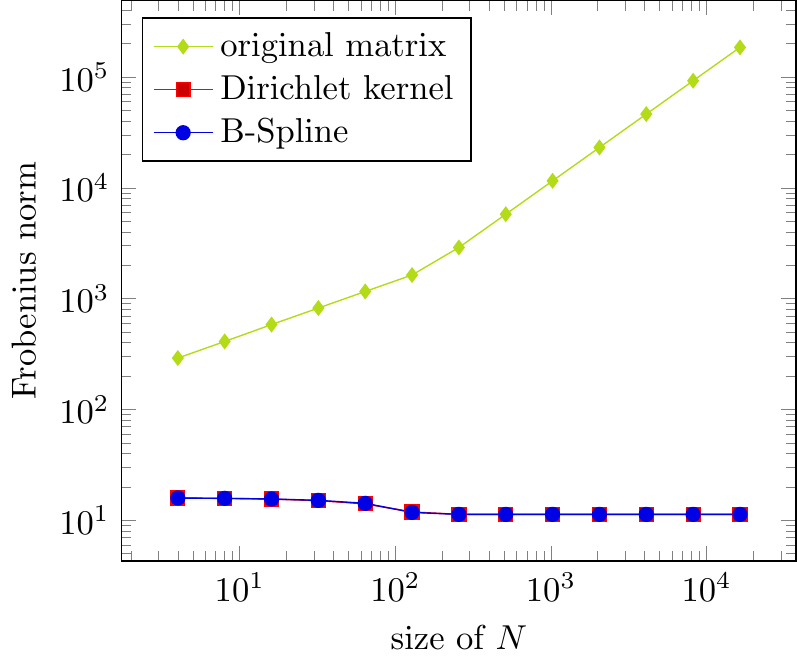}
		\caption{$m=2$ and $\sigma=2.0$}
	\end{subfigure}
	\begin{subfigure}{0.31\textwidth}
		\includegraphics[width=0.9\textwidth]{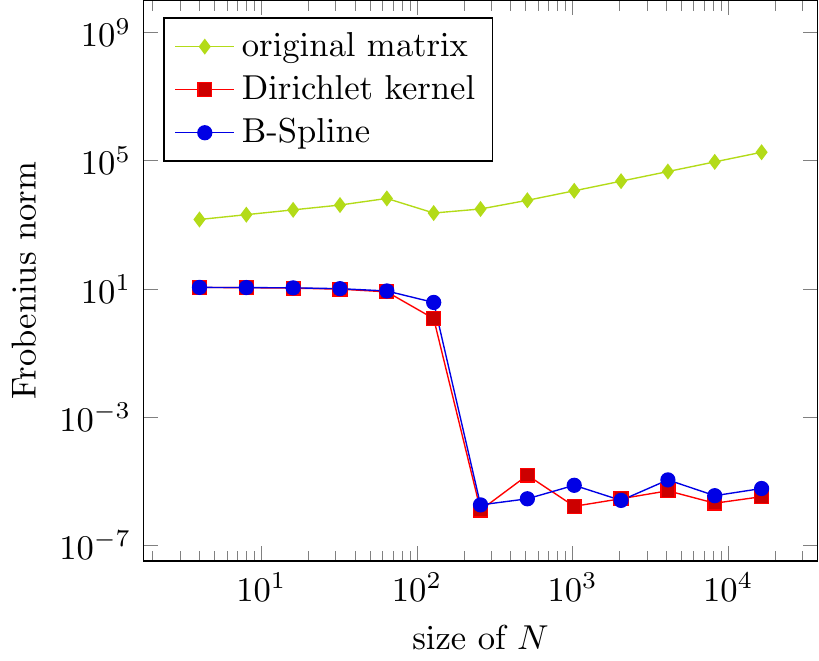}
		\caption{$m=4$ and $\sigma=1.0$}
	\end{subfigure}
	\caption{
		Comparison of Frobenius norms \eqref{eq:matrixnorm_opt_ansatz2} of the original matrix~$\b B$ and the optimized matrix~$\b B_{\mathrm{opt}}$ generated by Algorithm~\ref{alg:opt_fast_ansatz2} using \textbf{B-Spline} window functions as well as the Dirichlet kernel for \mbox{$M=128$} and \mbox{$N=2^c$} \textbf{jittered equispaced nodes} with \mbox{$c=2,\dots,14$}.
		\label{fig:matrixnorm_Bspline_jittered_ansatz2}}
\end{figure}
%
%\begin{figure}[p]
%	\centering
%	\begin{subfigure}{0.31\textwidth}
%		\includegraphics[width=0.9\textwidth]{ansatz2/matrixnorm_ansatz2_Bessel_o1_m2.pdf}
%		\caption{$m=2$ and $\sigma = 1.0$}
%	\end{subfigure}
%	\begin{subfigure}{0.31\textwidth}
%		\includegraphics[width=0.9\textwidth]{ansatz2/matrixnorm_ansatz2_Bessel_o2_m2.pdf}
%		\caption{$m=2$ and $\sigma = 2.0$}
%	\end{subfigure}
%	\begin{subfigure}{0.31\textwidth}
%		\includegraphics[width=0.9\textwidth]{ansatz2/matrixnorm_ansatz2_Bessel_o1_m4.pdf}
%		\caption{$m=4$ and $\sigma = 1.0$}
%	\end{subfigure}
%	\caption{
%		Comparison of Frobenius norms \eqref{eq:matrixnorm_opt_ansatz2} of the original matrix~$\b B$ and the optimized matrix~$\b B_{\mathrm{opt}}$ generated by Algorithm~\ref{alg:opt_fast_ansatz2} using \textbf{Kaiser-Bessel} window functions as well as the Dirichlet kernel for \mbox{$M=128$} and \mbox{$N=2^c$} \textbf{jittered equispaced nodes} with \mbox{$c=2,\dots,14$}.
%		\label{fig:matrixnorm_Bessel_jittered_ansatz2}}
%\end{figure}
%
\begin{figure}[p]
	\centering
	\includegraphics[width=0.38\textwidth]{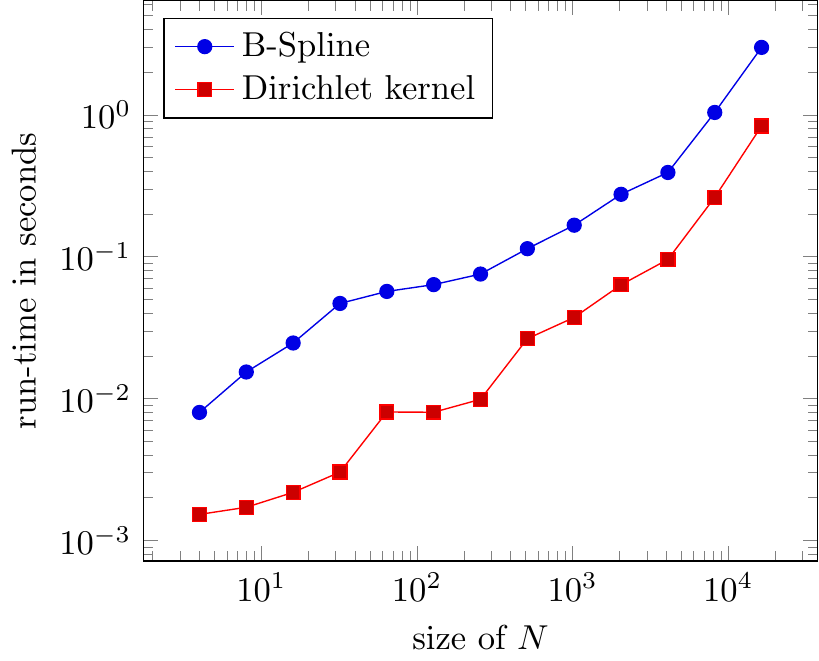}
	\caption{
		Comparison of run-times of Algorithm~\ref{alg:opt_fast_ansatz2} using the \mbox{\textbf{B-Spline}} as well as the Dirichlet kernel with $m=2$ and $\sigma=1.0$ for \mbox{$M=128$} and \mbox{$N=2^c$} \textbf{jittered equispaced nodes} with \mbox{$c=2,\dots,14$}.
		\label{fig:runtime_Dirichlet_ansatz2}}
\end{figure}
	\begin{figure}[p]
		\centering
		\begin{subfigure}{0.31\textwidth}
			\includegraphics[width=0.9\textwidth]{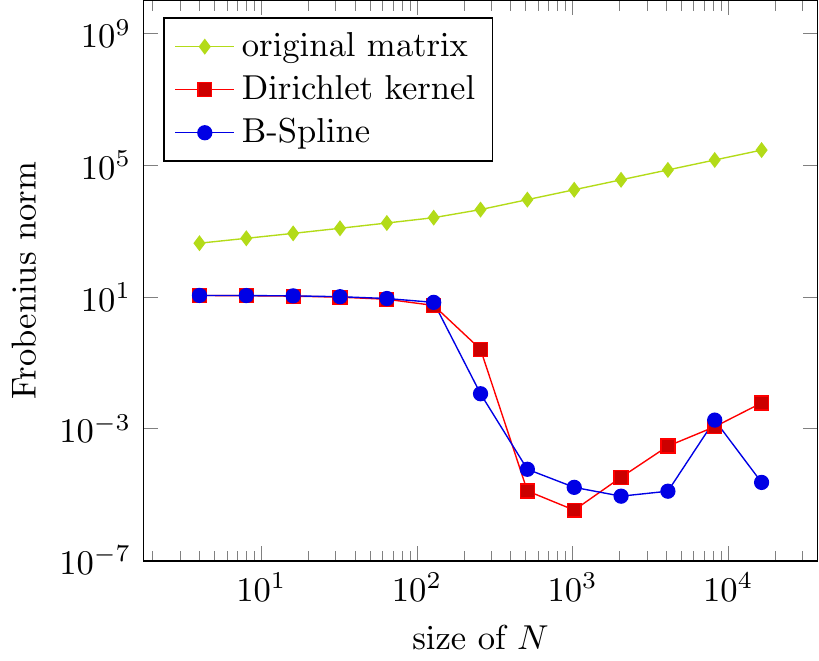}
			\caption{$m=2$ and $\sigma = 1.0$}
		\end{subfigure}
		\begin{subfigure}{0.31\textwidth}
			\includegraphics[width=0.9\textwidth]{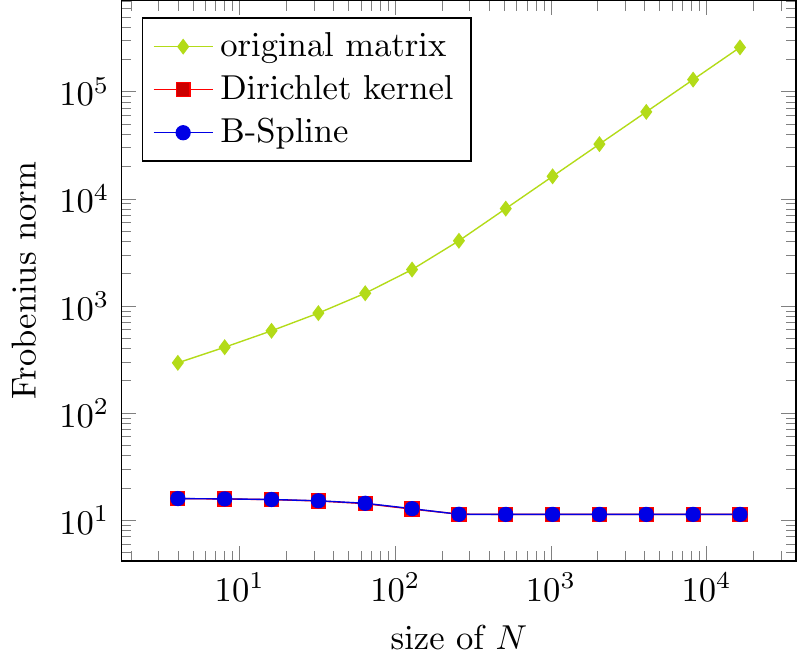}
			\caption{$m=2$ and $\sigma = 2.0$}
		\end{subfigure}
		\begin{subfigure}{0.31\textwidth}
			\includegraphics[width=0.9\textwidth]{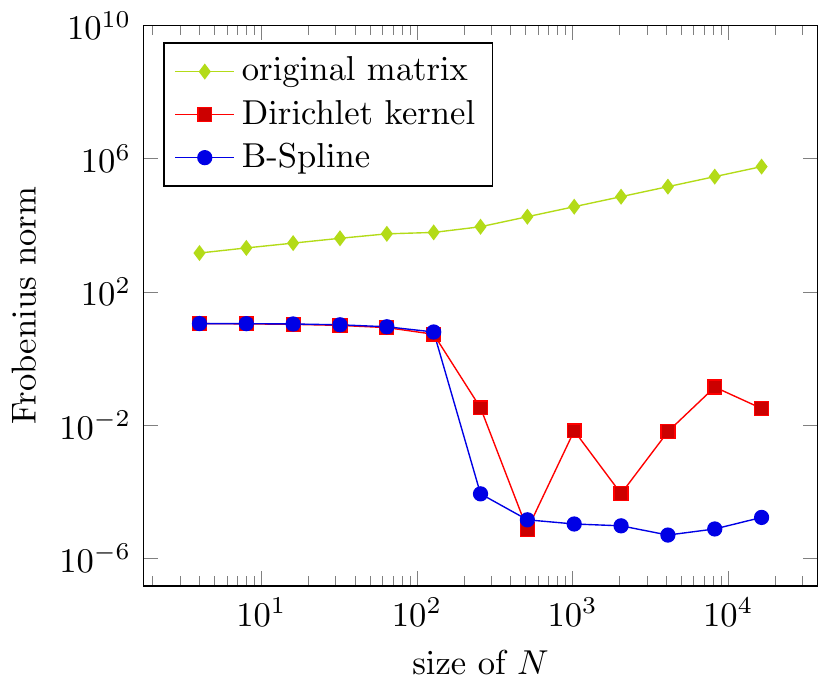}
			\caption{$m=4$ and $\sigma = 1.0$}
		\end{subfigure}
		\caption{
			Comparison of Frobenius norms \eqref{eq:matrixnorm_opt_ansatz2} of the original matrix~$\b B$ and the optimized matrix~$\b B_{\mathrm{opt}}$ generated by Algorithm~\ref{alg:opt_fast_ansatz2} using \textbf{B-Spline} window functions as well as the Dirichlet kernel for \mbox{$M=128$} and \mbox{$N=2^c$} \textbf{Chebyshev nodes} with \mbox{$c=2,\dots,14$}.
			\label{fig:matrixnorm_Bspline_tsch_ansatz2}}
	\end{figure}
\end{Example}

\begin{Remark}
\label{remark:toeplitz_approach}
Another approach for computing an inverse NFFT in the setting $N\geq M$ can be obtained by using the fact that $\b A^* \b A$ is of Toeplitz structure.
To this end, the Gohberg-Semencul formula, see \cite{HeRo84}, can be used to solve the normal equations $\b A^* \b A \b{\hat f}=\b A^* \b f$ exactly by analogy with \cite{AvShSh16}.
Therefore, the inverse NFFT consists of two steps: an adjoint NFFT applied to $\b f$ and the multiplication with the inverse of $\b A^* \b A$, which can be realized by means of 8 FFTs.
The computation of the components of the Gohberg-Semencul formula can be seen as a precomputational step.

However, even if this algorithm is exact and therefore yields better results than our approach from Section~\ref{subsubsec:ansatz2} there is no exact generalization to higher dimensions since there is no generalization of the Gohberg-Semencul formula to dimensions $>1$.
%Our approach from Section~\ref{subsubsec:ansatz2} on the contrary enables such a generalization.
This approach can only be used for approximations on very special grids, such as linogramm grids, utilizing the given specific structure, see \cite{AvCoDoIsSh08, AvShSh16}.
\ex
\end{Remark}

\begin{Example}
\label{ex:trig_poly_ansatz2}
As in Example~\ref{ex:trig_poly_ansatz1} we consider a trigonometric polynomial.
For \mbox{$\b f = \left(f(x_j)\right)_{j=1}^{N}$} with given nodes~$x_j$ we consider the estimate
\mbox{$\b{\check f} = \left( \check f_k \right)_{k=-\frac M2}^{\frac M2-1} = \b D^* \b F^* \b B_{\mathrm{opt}}^* \b f$},
where $\b B_{\mathrm{opt}}^*$ is the outcome of Algorithm~\ref{alg:opt_fast_ansatz2}, and compare it to the given function values $\b f$.
Again, we choose the Dirichlet kernel with \mbox{$\sigma=2.0$} and jittered equispaced nodes~$x_j$.
We consider the absolute and relative errors per node \eqref{eq:errors_per_node_lagrange}
for $r\in\{2,\infty\}$.
As a first experiment we use \mbox{$M=2^c$} with \mbox{$c=1,\dots,12$}, \mbox{$N=4M$}, and \mbox{$m=4$}.
In a second experiment we fix \mbox{$N=2048$} and \mbox{$M=512$} and the cut-off parameter $m$ shall be chosen \mbox{$m=c$} with \mbox{$c=4,\dots,12$}.
The corresponding results are displayed in Figure~\ref{fig:errors_trig_poly_ansatz2}.
We recognize that the errors per node for growing $M$, see (a), are worse if we consider very small sizes of $N$ but decrease for large sizes of $M$.
In (b) we can see that for fixed $M$ these errors remain quite stable when tuning the cut-off parameter $m$.
One could also consider the Toeplitz approach described in Remark~\ref{remark:toeplitz_approach}. There we observed stable absolute errors of size $10^{-15}$ and stable relative errors of size $10^{-18}$.
\begin{figure}[h]
	\centering
	\captionsetup[subfigure]{justification=centering}
	\begin{subfigure}{0.4\textwidth}
			\includegraphics[width=0.85\textwidth]{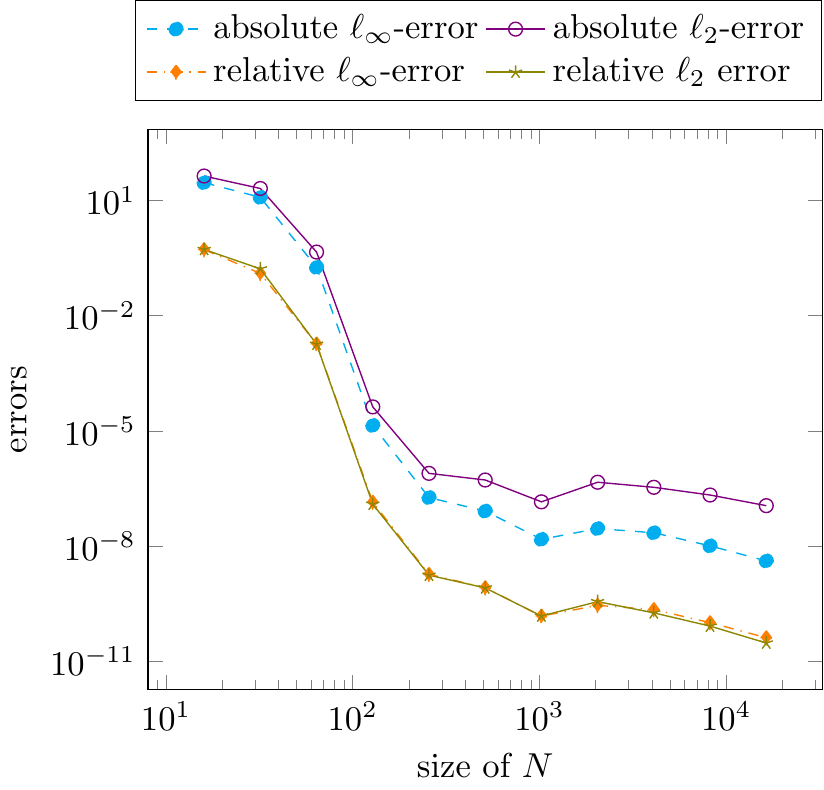}
			\caption{$M=2^c,\, c=1,\dots,12,$ \\ $N=4M$ and\, $m=4$.}
	\end{subfigure}
	\begin{subfigure}{0.4\textwidth}
			\includegraphics[width=0.85\textwidth]{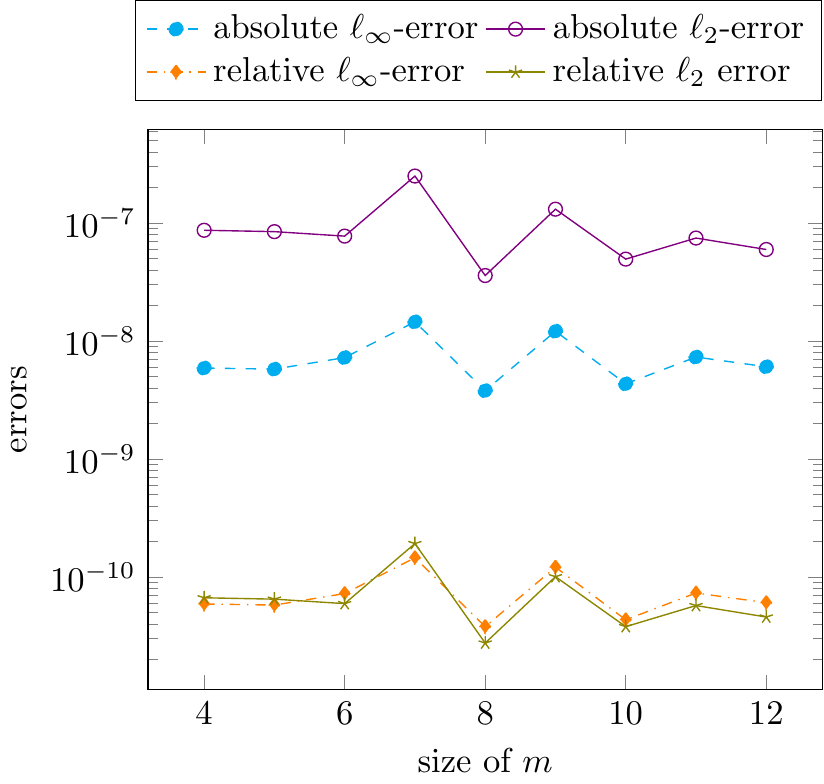}
			\caption{$N=2048, M=512$ \\ and $m=4,\dots,12$.}
	\end{subfigure}
	\caption{Comparison of the errors per node~\eqref{eq:errors_per_node_lagrange} of reconstructed Fourier coefficients generated by Algorithm~\ref{alg:opt_fast_ansatz2} using the Dirichlet kernel and $\sigma=2.0$ for jittered equispaced nodes.
		\label{fig:errors_trig_poly_ansatz2}}
\end{figure}
\ex
\end{Example}

\begin{Remark}
\label{remark:ansatz2_adj}
Having a look at the remaining matrix product \mbox{$\b K^* \b B$} we recognize that the corresponding optimization problem
\begin{equation*}
	\underset{\b B \in \R^{N\times M_\sigma} \colon \b B\, (2m+1)\text{-sparse }}{\text{Minimize }} \ 
	\left\| \b K^* \b B - \b I_{M_\sigma} \right\|_{\mathrm F}^2
\end{equation*}
was already solved to find a solution for the transposed problem
\begin{equation*}
	\underset{\b B \in \R^{N\times M_\sigma} \colon \b B\, (2m+1)\text{-sparse }}{\text{Minimize }} \ 
	\left\| \b B^* \b K - \b I_{M_\sigma} \right\|_{\mathrm F}^2.
\end{equation*}
Hence, it is merely left to examine the current approximation. Due to the minimization problem we have
\mbox{$\b F \b D \b A^* \b B = \b K^* \b B \approx \b I_{M_\sigma}$}.
Because \mbox{$\b B \in \R^{N \times M_\sigma}$} is rectangular and therefore not invertible 
we multiply by a right-inverse of~$\b B$, i.\,e., a matrix \mbox{$\b B' \in \R^{M_\sigma \times N}$} that holds \mbox{$\b B \b B' = \b I_N$}, and receive
\mbox{$\b F \b D \b A^* \approx \b B'$}.
Multiplying by a vector~$\b f$ yields
\mbox{$\b F \b D \b A^* \b f \approx \b B' \b f$},
which can be written by means of \mbox{$\b A^* \b f = \b h$} as 
\mbox{$\b F \b D \b h	\approx	\b B' \b f$}.
Finally, we multiply left-hand by~$\b B$, which results in the approximation
\mbox{$\b B \b F \b D \b h \approx \b f$}
and thus provides another method to invert the adjoint NFFT by modifying the NFFT.
\ex
\end{Remark}

\begin{Example}
\label{ex:trig_poly_ansatz2_adj}
Finally, we consider the analog of Example~\ref{ex:trig_poly_ansatz1_adj}.
To this end, we exchange the estimate by
\mbox{$\b{\tilde f} = \b B_{\mathrm{opt}} \b F \b D (\b A^* \b f)$}, 
where $\b B_{\mathrm{opt}}$ is the adjoint of the outcome of Algorithm~\ref{alg:opt_fast_ansatz2}.
We conduct the same experiments as in Example~\ref{ex:trig_poly_ansatz2} but consider the errors per node~\eqref{eq:errors_per_node_adjoint}.
The corresponding results can be found in Figure~\ref{fig:errors_trig_poly_ansatz2_adj}.
There we see that our optimization was not successful in the case $N>M$ since there is no reasonable chance to approximate the function values in any of the tested settings.
\begin{figure}[h]
	\centering
	\captionsetup[subfigure]{justification=centering}
	\begin{subfigure}{0.4\textwidth}
		\includegraphics[width=0.85\textwidth]{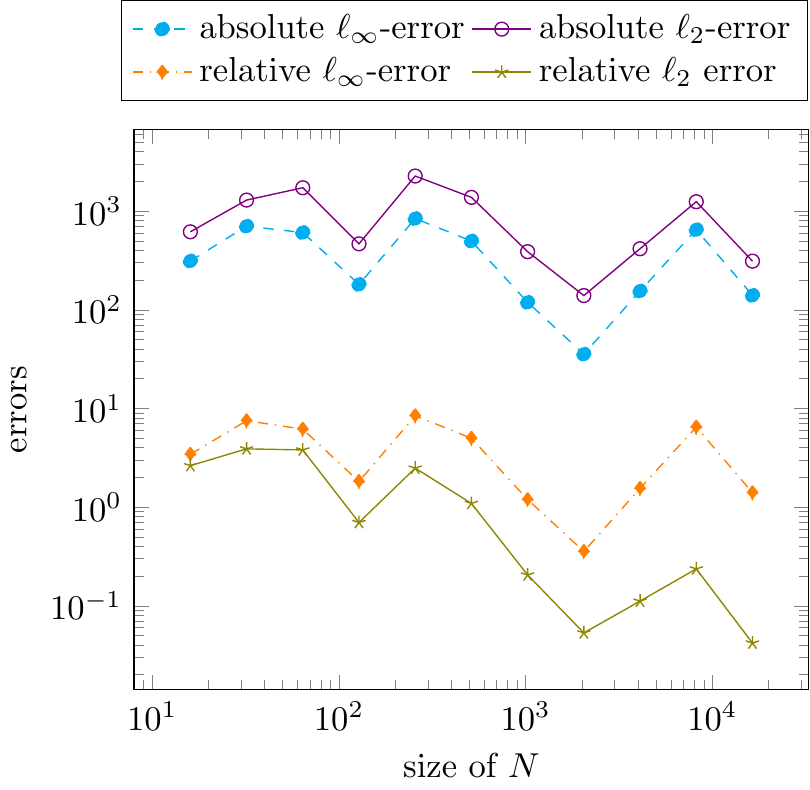}
		\caption{$M=2^c,\, c=1,\dots,12,$ \\ $N=4M$ and\, $m=4$.}
	\end{subfigure}
	\begin{subfigure}{0.4\textwidth}
		\includegraphics[width=0.85\textwidth]{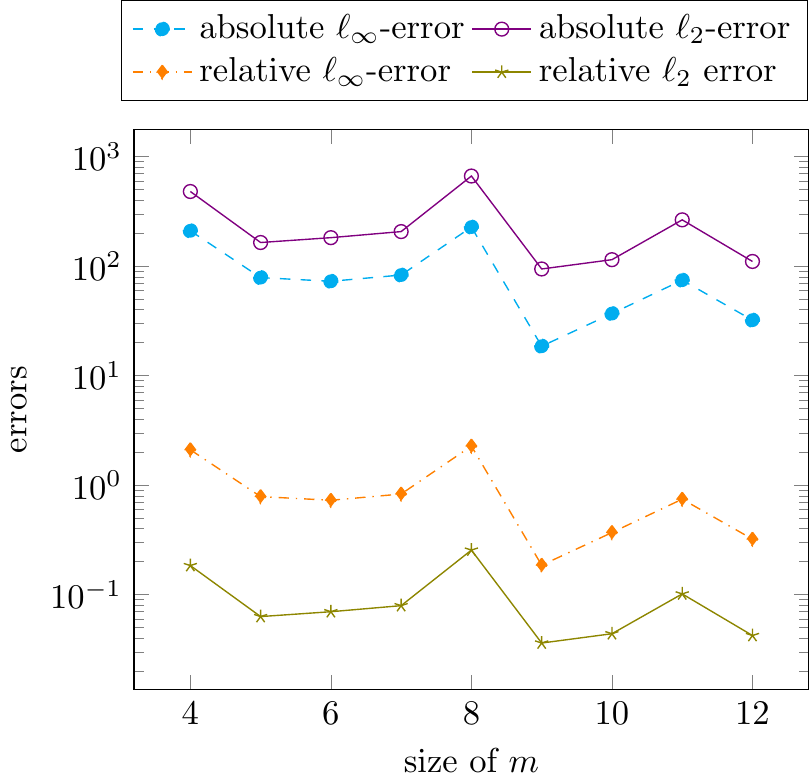}
		\caption{$N=2048, M=512$ \\ and $m=4,\dots,12$.}
	\end{subfigure}
	\caption{Comparison of the errors per node~\eqref{eq:errors_per_node_lagrange} of reconstructed function values generated by Algorithm~\ref{alg:opt_fast_ansatz2} using the Dirichlet kernel and $\sigma=2.0$ for jittered equispaced nodes.
		\label{fig:errors_trig_poly_ansatz2_adj}}
\end{figure}
\ex
\end{Example}

\section{Frames\label{sec:frames}}

During the last few decades the popularity of frames rose rapidly and more and more articles are concerned with this topic.
Recently, an approach was published in~\cite{GeSo14} connecting frame approximation to the adjoint NFFT.
Thus, in this section we consider the concept of frames and discuss an approach for inverting the NFFT based on \cite{GeSo14}.
Besides the basic information about the approximation of the inverse frame operator, a link to the methods explained in Section~\ref{subsec:rect} is provided.

\subsection{Approximation of the inverse frame operator\label{subsec:approx}}

First of all, we sum up the main idea of frames and frame approximation, basically adapted from~\cite{GeSo14} and~\cite{Chr16}.

\begin{definition}
Let $\mathcal H$ be a separable Hilbert space with inner product \mbox{$\langle\cdot,\cdot\rangle$}.
Then a sequence \mbox{$\{\varphi_j\}_{j=1}^{\infty}\subset\mathcal H$} is called \textbf{frame} if there exist constants \mbox{$A,B>0$} such that
\begin{equation*}
	A \|f\|^2 \leq \sum_{j=1}^{\infty} |\langle f,\varphi_j \rangle|^2 \leq B \|f\|^2 \quad \forall f \in \mathcal H.
\end{equation*}
The operator
\mbox{$S \colon \mathcal H \to \mathcal H, Sf = \sum_{j=1}^{\infty} \langle f, \varphi_j \rangle \varphi_j$},
is named the \textbf{frame operator}.
\ex
\end{definition}

Given this definition we can already state one of the most important results in frame theory, the so-called frame decomposition.
If 
\mbox{$\{\varphi_j\}_{j=1}^{\infty}$}
is a frame with frame operator~$S$, then
\begin{equation}
\label{eq:frame_decomp}
f = \sum_{j=1}^{\infty} \langle f, S^{-1} \varphi_j \rangle \varphi_j
= \sum_{j=1}^{\infty} \langle f, \varphi_j \rangle S^{-1} \varphi_j \quad \forall f \in \H.
\end{equation}
In other words, every element of $\H$ can be represented as a linear combination of the elements of the frame, which is a property similar to an orthonormal basis. 
Though, to apply~\eqref{eq:frame_decomp} it is necessary to state the inverse operator~$S^{-1}$ explicitly.
However, this is usually difficult (or even impossible).
Hence, it is necessary to be able to approximate~$S^{-1}$.
For this purpose, we use the method from~\cite{GeSo12} by analogy with~\cite{GeSo14}, which is based on so-called admissible frames, see~\cite[Definition 1]{GeSo14}.
%
%To this end, we need the term of an admissible frame which is considered in the following definition, cf.~\cite{GeSo14}.
%
%
%\begin{definition}
%\label{def_admissible}
%A frame \todo{vllt nur drauf verweisen?}
%\mbox{$\{\psi_l\}_{l=-\infty}^{\infty}$}
%is called \textbf{admissible frame} with respect to a frame
%\mbox{$\{\varphi_j\}_{j=1}^{\infty}$}
%if the following conditions hold.
%%
%\begin{itemize}
%%	
%	\item[(i)] 
%	There exist positive constants $c_0$ and $t>1$ such that
%	%
%	\begin{equation*}
%		|\langle \psi_k,\psi_l \rangle| \leq c_0 (1+|k-l|)^{-t}, \quad k,l \in \Z.
%	\end{equation*}
%%	
%	\item[(ii)] 
%	There exist positive constants $c_1$ and $s>\frac 12$ such that
%	%
%	\begin{equation*}
%		|\langle \varphi_j, \psi_l \rangle| \leq c_1 (1+|j-l|)^{-s}, \quad j \in \N,\,l \in \Z.
%	\end{equation*}
%%
%\ex	
%\end{itemize}
%\end{definition}

We suppose 
\mbox{$\{\psi_l\}_{l=-\infty}^{\infty}$}
is an admissible frame with respect to 
\mbox{$\{\varphi_j\}_{j=1}^{\infty}$}.
As shown in \cite{GeSo12}, the dual frame 
\mbox{$\{S^{-1} \varphi_j\}_{j=1}^{\infty}$}
can then be approximated by
\begin{equation}
\label{eq:approx_dual}
	S^{-1} \varphi_j \approx \tilde \varphi_j \coloneqq \sum_{l=-\frac{M_\sigma}{2}}^{\frac{M_\sigma}{2}-1} p_{l,j} \, \psi_l, \quad j = 1, \dots, N,
\end{equation}
where
\mbox{$\b \Phi ^\dagger \eqqcolon \left[ p_{l,j} \right]_{l=-\frac{M_\sigma}{2},\,j=1}^{\frac{M_\sigma}{2}-1,\;N}$}
is the Moore-Penrose pseudoinverse of the matrix
\begin{equation}
\label{eq:matrix_Phi}
	\b \Phi \coloneqq \left[ \langle \varphi_j, \psi_l \rangle \right]_{j=1,\, l=-\frac{M_\sigma}{2}}^{N,\  \frac{M_\sigma}{2}-1}.
\end{equation}
In so doing, the matrix dimensions have to fulfill the condition
\mbox{$N \geq M_\sigma + c M_\sigma^{\frac{1}{2s-1}}$}.
Given this approximation of the dual frame, inserting \eqref{eq:approx_dual} in \eqref{eq:frame_decomp} and cutting off the infinite sum yields the approximation
\begin{equation}
\label{eq:approx_function}
	f
	\approx
	\tilde f 
	\coloneqq 
	\sum_{j=1}^{N} \sum_{l=-\frac{M_\sigma}{2}}^{\frac{M_\sigma}{2}-1} \langle f, \varphi_j \rangle \, p_{l,j} \, \psi_l.
\end{equation}

\subsection{Linking the frame-theoretical approach to the iNFFT\label{subsec:link}}

Now we aim to find a link between the frame approximation~\eqref{eq:approx_function} and the iNFFT from Section~\ref{subsec:rect}.
To this end, we consider a discrete version of the frames recommended in~\cite{GeSo14}, i.\,e.,
\begin{equation}
\label{eq:frames_phi_psi}
	\{\varphi_j(k) \coloneqq \e^{-2\pi\i k x_j}, \, j \in \N \}
	\quad\text{ and }\quad
	\left\{\psi_l(k) \coloneqq \frac{\e^{-2\pi\i kl/M_{\sigma}}}{M_\sigma \hat w(-k)}, \, l \in \Z \right\}
\end{equation}
for \mbox{$k \in \Z$}, where \mbox{$x_j \in [-\frac 12, \frac 12)$} denote the nonequispaced nodes.
Note that we changed time and frequency domain to match our notations in Section~\ref{sec:nfft}.
%
%Thereby we obtain the scalar product
%%
%\begin{equation}
%\label{eq:scalarproduct}
%	\langle \varphi_j, \psi_l \rangle_{L_2} 
%	= 
%	\int_{-\infty}^{\infty} \varphi_j(k) \,\overline{\psi_l(k)} \,\mathrm dk
%	= 
%	\int_{-\infty}^{\infty} \frac{1}{M_\sigma \hat w(k)} \, \e^{-2\pi\i k \left(x_j - \frac{l}{M_{\sigma}}\right)} \,\mathrm dk.
%\end{equation}
%%
%For $\frac{1}{\hat w(k)} \in L_1(\R)$ we consider the Fourier transform
%%
%\begin{equation*}
%	\int_{-\infty}^{\infty} \frac{1}{M_\sigma \hat w(k)} \, \e^{-2\pi\i ky} \,\mathrm dk \eqqcolon F_{L_2}(y).
%\end{equation*}
%%
%In combination with \eqref{eq:scalarproduct} this yields
%$
%\langle \varphi_j, \psi_l \rangle_{L_2}  = F_{L_2} \hspace{-1pt} \left(x_j - \tfrac{l}{M_\sigma}\right).
%$
%Hence, the matrix~$\b \Phi$ from \eqref{eq:matrix_Phi} is of the form
%%
%\begin{equation*}
%	\b \Phi_{L_2} = \bigg( F_{L_2} \hspace{-1pt} \left(x_j -\tfrac{l}{M_{\sigma}}\right) \bigg)_{j=1,\, l=-\frac{M_\sigma}{2}}^{N,\ \frac{M_\sigma}{2}-1}.
%\end{equation*}
%
%Considering a discrete version of the frames instead, i.\,e., let \eqref{eq:frames_phi_psi} be only defined for \mbox{$k\in\Z$}, 
Thereby,
we receive the scalar product
\begin{equation*}
	\langle \varphi_j, \psi_l \rangle_{\ell_2} 
	=
	\sum_{k=-\infty}^{\infty} \varphi_j(k) \, \overline{\psi_l(k)}
	= 
	\sum_{k=-\infty}^{\infty} \frac{1}{M_\sigma \hat w(k)} \, \e^{-2\pi\i k \left(x_j - \frac{l}{M_{\sigma}}\right)}.
	%	\eqqcolon
	%	F_{\ell_2} \left(x_j - \tfrac{l}{M_{\sigma}}\right).
\end{equation*}
Truncating the infinite sum yields an approximation of the matrix~$\b \Phi$ in~\eqref{eq:matrix_Phi} by
\begin{equation}
\label{eq:matrix_Phi_l2}
	\b \Phi_{\ell_2} = \bigg( \overline{K \hspace{-2pt} \left(x_j -\tfrac{l}{M_{\sigma}}\right)} \bigg)_{j=1,\, l=-\frac{M_\sigma}{2}}^{N,\ \frac{M_\sigma}{2}-1}
\end{equation}
with the kernel~$K(x)$ from~\eqref{eq:kernel}. 
In the following explanations we choose \mbox{$\b \Phi = \b \Phi_{\ell_2}$}.

\begin{Remark}
In general, we do not have admissible frames for our known window functions~$w$ because of the factor 
\mbox{$\frac{1}{\hat w(k)}$}, \mbox{$k=-\infty,\dots,\infty$}.
Only for finite frames the appropriate conditions can be satisfied.
In addition, it must be pointed out that for other sampling patterns than the jittered equispaced nodes it was already mentioned in~\cite{GeSo16} that the admissibility condition may not hold or even the conditions for constituting a frame may fail, cf.~\cite{GeSo14}.
\ex
\end{Remark}

\subsubsection{Theoretical results\label{subsubsec:theory}}

For these given frames we consider again the frame approximation~\eqref{eq:approx_function}.
Our aim is to show that the inversion of the NFFT illustrated in Section~\ref{subsec:rect} can also be expressed by means of a frame-theoretical approach, i.\,e., by approximating a function~$\hat f$ in the frequency domain, cf.~\eqref{eq:fourier_transform}, and subsequently sampling at equispaced points \mbox{$k=-\frac M2, \dots, \frac M2-1$}.

The frame approximation of the function~$\hat f$ is given by
\begin{equation}
\label{eq:approx_function_fourier}
	\tilde{\hat f} 
	= 
	\sum_{j=1}^{N} \sum_{l=-\frac{M_\sigma}{2}}^{\frac{M_\sigma}{2}-1} 
	\langle \hat f, \varphi_j \rangle_{\ell_2} \, p_{l,j} \, \psi_l
\end{equation}
with $p_{l,j}$ as defined in~\eqref{eq:approx_dual}.
Hence, we are acquainted with two different methods to compute the Fourier coefficients~$\hat f_k$ from given data~\mbox{$\langle \hat f, \varphi_j \rangle \eqqcolon f_j$}, the frame approximation~\eqref{eq:approx_function_fourier} as well as the adjoint NFFT~\eqref{eq:approx_nfft*}.
In what follows, we suppose that we can achieve a reconstruction via frames.
Utilizing this, we modify the adjoint NFFT so that we can use this simple method to invert the NFFT.
Thus, we are looking for an approximation of the form 
\mbox{$\tilde h_k \approx \tilde{\hat f} (k) \approx \hat f_k$},\, \mbox{$k=-\tfrac M2, \dots, \tfrac M2-1$}.

To compare the adjoint NFFT and the frame approximation we firstly rewrite the approximation~\eqref{eq:approx_nfft*} of the adjoint NFFT by analogy with~\cite{GeSo14}.
This yields
\begin{equation}
\label{eq:frame_nfft*}
	\tilde h_k 
	%	=
	%	\frac{1}{M_{\sigma}} \sum_{l=-\frac{M_{\sigma}}{2}}^{\frac{M_{\sigma}}{2}-1} 
	%	\sum_{j=1}^{N} f_j\, \tilde w_m \hspace{-2.75pt} \left(x_j-\tfrac{l}{M_{\sigma}}\right) 
	%	\frac{\e^{-2\pi\i kl/M_{\sigma}}}{\hat w(-k)}
	= 
	\sum_{l=-\frac{M_{\sigma}}{2}}^{\frac{M_{\sigma}}{2}-1} c_l \, \psi_l(k), \quad
	k = -\tfrac M2,\dots,\tfrac M2-1,
\end{equation}
with coefficients vector
\begin{equation}
\label{eq:coeff_c}
	\b c 
	\coloneqq \left( c_l \right)_{l=-\frac{M_{\sigma}}{2}}^{\frac{M_{\sigma}}{2}-1}
	= \left( \sum_{j=1}^{N} f_j \, \tilde w_m \hspace{-2.75pt} \left(x_j -\tfrac{l}{M_{\sigma}}\right)\right)_{l=-\frac{M_{\sigma}}{2}}^{\frac{M_{\sigma}}{2}-1}
	= \b B^*\b f,
\end{equation}
where
\mbox{$\b f \coloneqq \left(f_j\right)_{j=1}^N = (\langle \hat f, \varphi_j \rangle_{\ell_2})_{j=1}^N.$}
Likewise we rewrite \eqref{eq:approx_function_fourier}, cf.~\cite{GeSo14}, as
\begin{equation}
\label{eq:frame_approx}
	\tilde{\tilde h}_k \coloneqq \tilde{\hat f} (k) 
	= 
	\sum_{l=-\frac{M_{\sigma}}{2}}^{\frac{M_{\sigma}}{2}-1} d_l \, \psi_l(k),
	\quad 
	k = -\tfrac M2, \dots, \tfrac M2-1,
\end{equation}
with
\mbox{$\b d \coloneqq \left( d_l \right)_{l=-\frac{M_{\sigma}}{2}}^{\frac{M_{\sigma}}{2}-1} = \b \Phi ^\dagger \b f.$}
Furthermore, we define the vectors
\mbox{$\tilde{\b h} \coloneqq (\tilde h_k)_{k=-\frac M2}^{\frac M2-1}$}
and
\mbox{$\tilde{\tilde{\b h}} \coloneqq (\tilde{\tilde h}_k)_{k=-\frac M2}^{\frac M2-1}$}
as well as the matrix
\mbox{$\b \Psi \coloneqq (\psi_l(k))_{k=-\frac M2,\, l=-\frac{M_\sigma}{2}}^{\frac M2-1,\ \frac{M_\sigma}{2}-1}$}.
Thereby, \eqref{eq:frame_nfft*} and \eqref{eq:frame_approx} can be represented by
\mbox{$\tilde{\b h} = \b \Psi \b c$} 
and 
\mbox{$\tilde{\tilde{\b h}} = \b \Psi \b d$}.
Hence, we can now estimate the difference between both approximations.

\begin{theorem}
Let 
\mbox{$\hat{\b w} \coloneqq \left( (\hat w(-k))^{-1} \right)_{k=-\frac M2}^{\frac M2-1}$}
be a vector satisfying \mbox{$\|\hat{\b w}\|_2 < \infty$}.
Then the following estimates hold.
\begin{enumerate}
	\item[(i)] 
	For \mbox{$M_\sigma < N$} we have
	\begin{equation}
	\label{eq:dist_N}
		\left\| \tilde {\b h} - \tilde{\tilde {\b h}} \right\|_{2} 
		\leq
		\frac 1{\sqrt{M_\sigma}} \left\| \hat{\b w} \right\|_{2} \,\|\b \Phi \b B^* - \b I_{N}\|_{\textrm F} \, \|\b \Phi^\dagger \b f\|_2.
	\end{equation}
	\item[(ii)] 
	For \mbox{$M_\sigma > N$} we have
	\begin{equation}
	\label{eq:dist_M}
		\left\| \tilde {\b h} - \tilde{\tilde {\b h}} \right\|_{2} 
		\leq
		\frac 1{\sqrt{M_\sigma}} \left\| \hat{\b w} \right\|_{2} \,\|\b B^* \b \Phi - \b I_{M_\sigma}\|_{\textrm F} \, \|\b \Phi^\dagger \b f\|_2,
	\end{equation}
\end{enumerate}
where $\b B^*$ denotes the adjoint matrix of~\eqref{eq:matrix_B} and \mbox{$\b \Phi = \b \Phi_{\ell_2}$} is given as in \eqref{eq:matrix_Phi_l2}.
\end{theorem}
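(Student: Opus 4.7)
The plan is to reduce both estimates to a single submultiplicativity step. With $\b c \coloneqq \b B^{*}\b f$ (from \eqref{eq:coeff_c}) and $\b d \coloneqq \b\Phi^{\dagger}\b f$, we have $\tilde{\b h}-\tilde{\tilde{\b h}} = \b\Psi(\b c - \b d)$, so
\[
\bigl\|\tilde{\b h}-\tilde{\tilde{\b h}}\bigr\|_{2} \;\le\; \|\b\Psi\|_{2}\,\|\b c - \b d\|_{2} \;\le\; \|\b\Psi\|_{\mathrm F}\,\|\b c - \b d\|_{2}.
\]
What then remains is (a) a direct evaluation of $\|\b\Psi\|_{\mathrm F}$, which explains the prefactor $\|\hat{\b w}\|_{2}/\sqrt{M_{\sigma}}$, and (b) a Moore--Penrose identity tailored to each case that expresses $\b c - \b d$ as a product of the relevant interpolation residual and $\b\Phi^{\dagger}\b f$.

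For (a), I read $|\psi_{l}(k)|^{2} = 1/(M_{\sigma}^{2}|\hat w(-k)|^{2})$ directly from the definition in \eqref{eq:frames_phi_psi}; this is independent of~$l$, so summing first over the $M_{\sigma}$ values of $l$ and then over $k \in \{-M/2,\ldots,M/2-1\}$ yields $\|\b\Psi\|_{\mathrm F}^{2} = \|\hat{\b w}\|_{2}^{2}/M_{\sigma}$, and the desired bound $\|\b\Psi\|_{2}\le\|\hat{\b w}\|_{2}/\sqrt{M_{\sigma}}$ follows. For (b) in case~(ii), the hypothesis $M_{\sigma}>N$ together with $\b\Phi$ of full row rank gives $\b\Phi\b\Phi^{\dagger} = \b I_{N}$, so $\b f = \b\Phi\b d$. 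Substituting in $\b c - \b d$ collapses it to a clean residual,
\[
\b c - \b d \;=\; \b B^{*}\b\Phi\b d - \b d \;=\; (\b B^{*}\b\Phi - \b I_{M_{\sigma}})\b d,
\]
and the elementary estimate $\|\b M \b x\|_{2}\le\|\b M\|_{\mathrm F}\|\b x\|_{2}$, combined with $\b d = \b\Phi^{\dagger}\b f$, completes \eqref{eq:dist_M}.

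Case~(i) is structurally similar but more delicate. For $M_{\sigma}<N$ with $\b\Phi$ of full column rank one has the dual identity $\b\Phi^{\dagger}\b\Phi = \b I_{M_{\sigma}}$, hence $\b B^{*} = \b\Phi^{\dagger}\b\Phi\b B^{*}$ and therefore
\[
\b c - \b d \;=\; \b\Phi^{\dagger}(\b\Phi\b B^{*} - \b I_{N})\b f.
\]
The main obstacle will be to convert this identity into the stated bound with the factor $\|\b\Phi^{\dagger}\b f\|_{2}$, as the textbook submultiplicativity only delivers $\|\b\Phi^{\dagger}\|_{2}\,\|\b\Phi\b B^{*} - \b I_{N}\|_{\mathrm F}\,\|\b f\|_{2}$, which is in general strictly larger. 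To get past this, I would decompose $\b f = \b\Phi\b d + \b r$ with $\b r \in R(\b\Phi)^{\perp}$; since $\b\Phi^{\dagger}\b r = 0$, the component $\b r$ drops out under $\b\Phi^{\dagger}$ and one is reduced, as in case~(ii), to controlling $(\b\Phi\b B^{*}-\b I_{N})\b\Phi\b d$ by Frobenius submultiplicativity against $\b d = \b\Phi^{\dagger}\b f$. Once this reformulation is in place, the same pattern as in case~(ii) produces \eqref{eq:dist_N}.
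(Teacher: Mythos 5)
Your prefactor computation and your case~(ii) argument coincide with the paper's proof: the paper likewise bounds $\|\tilde{\b h}-\tilde{\tilde{\b h}}\|_2\le\|\b\Psi\|_{\mathrm F}\,\|\b c-\b d\|_2$ with $\|\b\Psi\|_{\mathrm F}=M_\sigma^{-1/2}\|\hat{\b w}\|_2$, and for $M_\sigma>N$ it uses the residual identity $\b B^*-\b\Phi^\dagger=(\b B^*\b\Phi-\b I_{M_\sigma})\b\Phi^*(\b\Phi\b\Phi^*)^{-1}$, which is exactly what your substitution $\b f=\b\Phi\b d$ produces (both arguments implicitly assume full row rank of $\b\Phi$, as does the paper through the formula for $\b\Phi^\dagger$).

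The gap is in case~(i). You correctly derive $\b c-\b d=\b\Phi^\dagger(\b\Phi\b B^*-\b I_N)\b f$, which is also the paper's identity, but your proposed repair does not close the argument: writing $\b f=\b\Phi\b d+\b r$ with $\b r$ orthogonal to the range of $\b\Phi$, the component $\b r$ does \emph{not} drop out, because $\b\Phi^\dagger$ is applied to $(\b\Phi\b B^*-\b I_N)\b r$ rather than to $\b r$ itself. Indeed,
\begin{equation*}
\b\Phi^\dagger\left(\b\Phi\b B^*-\b I_N\right)\b r
=\b\Phi^\dagger\b\Phi\,\b B^*\b r-\b\Phi^\dagger\b r
=\b B^*\b r,
\end{equation*}
using $\b\Phi^\dagger\b\Phi=\b I_{M_\sigma}$ and $\b\Phi^\dagger\b r=\b 0$; equivalently, $\b c-\b d=(\b B^*\b\Phi-\b I_{M_\sigma})\b d+\b B^*\b r$. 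Since for $M_\sigma<N$ the vector $\b f$ need not lie in the range of $\b\Phi$, the term $\b B^*\b r$ is generically nonzero, so your reduction to case~(ii) leaves an uncontrolled remainder and \eqref{eq:dist_N} is not established. For comparison, the paper handles precisely this step in a single line: it bounds $\|\b\Phi^\dagger(\b\Phi\b B^*-\b I_N)\b f\|_2$ directly by $\|\b\Phi\b B^*-\b I_N\|_{\mathrm F}\,\|\b\Phi^\dagger\b f\|_2$, i.e., it pulls the residual out in Frobenius norm and attaches $\b\Phi^\dagger$ to $\b f$ without any intermediate decomposition. Your instinct that this passage needs an argument is sound, but the orthogonal splitting you propose does not supply one, so as written your proof of part~(i) is incomplete.
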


\begin{proof}
By analogy with~\cite{GeSo14} Definitions~\eqref{eq:frame_nfft*} and \eqref{eq:frame_approx} 
%combined with the definition of the Frobenius norm 
imply
\begin{equation*}
	\begin{split}
	\left\| \tilde {\b h} - \tilde{\tilde {\b h}} \right\|_{2} 
	=
	\left\| \b \Psi \b c - \b \Psi \b d \right\|_{2} 
	%	=
	%	\left\| \b \Psi (\b c - \b d) \right\|_{2}
	\leq
	\left\| \b \Psi \right\|_{\mathrm F} \left\| \b c - \b d \right\|_{2} 
	=
	\sqrt{\sum_{k=-\frac M2}^{\frac M2-1} \sum_{l=-\frac{M_\sigma}{2}}^{\frac{M_\sigma}{2}-1} |\psi_l(k)|^2} \,\cdot
	\left\| \b c - \b d \right\|_{2} \\
	%	&= 
	%	\sqrt{\sum_{k=-\frac M2}^{\frac M2-1} \sum_{l=-\frac{M_\sigma}{2}}^{\frac{M_\sigma}{2}-1} 
	%		\left| \frac{1}{M_\sigma \hat w(-k)} \, \e^{-2\pi\i kl/M_{\sigma}} \right|^2} \,\cdot
	%	\left\| \b c - \b d \right\|_{2} \\
	=
	\frac{1}{M_\sigma}
	\sqrt{\sum_{k=-\frac M2}^{\frac M2-1} \left| \frac{1}{\hat w(-k)} \right|^2
		\sum_{l=-\frac{M_\sigma}{2}}^{\frac{M_\sigma}{2}-1} 
		\underbrace{\left| \e^{-2\pi\i kl/M_{\sigma}} \right|^2}_{\leq 1}}\,\cdot
	\left\| \b c - \b d \right\|_{2} 
	\leq
	%	\frac{1}{M_\sigma} \sqrt{M_\sigma} \cdot
	%	\sqrt{\sum_{k=-\frac M2}^{\frac M2-1} \left| \frac{1}{\hat w(-k)} \right|^2} 
	%	\cdot \left\| \b c - \b d \right\|_{2} \\
	%	&=
	\frac 1{\sqrt{M_\sigma}} \left\| \hat{\b w} \right\|_{2} \left\| \b c - \b d \right\|_{2}.
	\end{split}
\end{equation*}
Next we consider the norm \mbox{$\|\b c - \b d\|_2$} separately.
\begin{enumerate}
	\item[(i)] 
	For \mbox{$M_\sigma < N$} we have by~\eqref{eq:coeff_c} and \eqref{eq:frame_approx} that
	\begin{equation*}
		\b c -\b d 
		=
		\big(\b B^* - \b \Phi ^\dagger \big) \b f
		=
		\big(\b B^* - (\b \Phi^* \b \Phi)^{-1}\b \Phi^* \big) \b f 
		=
		\big((\b \Phi^* \b \Phi)^{-1}\b \Phi^* \big) (\b \Phi \b B^* - \b I_N) \b f.
	\end{equation*}
	This leads to 
	\begin{equation*}
		\|\b c - \b d\|_2
		\leq
		\|\b \Phi \b B^* - \b I_N\|_{\textrm F} \,\|\big((\b \Phi^* \b \Phi)^{-1}\b \Phi^* \big) \b f\|_2 
		\leq
		\|\b \Phi \b B^* - \b I_N\|_{\textrm F} \, \|\b \Phi^\dagger \b f\|_2.
	\end{equation*}
	\item[(ii)] 
	In analogy, for \mbox{$M_\sigma > N$} we have that
	\begin{equation*}
		\b c -\b d 
		=
		\big(\b B^* - \b \Phi ^\dagger \big) \b f
		=
		\big(\b B^* - \b \Phi^*(\b \Phi \b \Phi^*)^{-1}\big) \b f 
		=
		(\b B^* \b \Phi - \b I_{M_\sigma}) \big(\b \Phi^*(\b \Phi \b \Phi^*)^{-1} \big) \b f
	\end{equation*}
	and thereby
	\begin{equation*}
		\|\b c - \b d\|_2
		\leq
		\|\b B^* \b \Phi - \b I_{M_\sigma}\|_{\textrm F} \,\|\big(\b \Phi^*(\b \Phi \b \Phi^*)^{-1} \big) \b f\|_2 
		\leq
		\|\b B^* \b \Phi - \b I_{M_\sigma}\|_{\textrm F} \, \|\b \Phi^\dagger \b f\|_2.
	\end{equation*}
\end{enumerate}
\rule{0pt}{0pt}
\end{proof}

\subsubsection{Optimization\label{subsubsec:opt}}

Our aim is to minimize the distances shown in~\eqref{eq:dist_N} and \eqref{eq:dist_M} to modify the adjoint NFFT such that we can achieve an inversion of the NFFT.
To this end, we suppose we are given nodes~$x_j$ as well as frames~$\{\varphi_j\}$ and~$\{\psi_l\}$ and thereby the matrix~\mbox{$\b \Phi=\b \Phi_{\ell_2}$}.
Thus, our purpose is to improve the approximation of the adjoint NFFT by modifying the matrix~$\b B^*$.

\paragraph{Connection to the first approach\label{par_opt_ansatz1}}

Firstly, we consider the case \mbox{$M_\sigma < N$}.
Minimizing the distance in~\eqref{eq:dist_N} yields the optimization problem
\begin{equation}
\label{eq:opt_Phi_B}
	\underset{\b B \in \R^{N\times M_\sigma} \colon \b B\, (2m+1)\text{-sparse }}{\text{Minimize }}\ \|\b \Phi \b B^* - \b I_N\|_{\textrm F}^2,
\end{equation}
which is of similar form to those seen in Section~\ref{subsec:rect}. 
For solving this problem we have a closer look at the matrix~\mbox{$\b \Phi \b B^*$}.
By Definitions~\eqref{eq:matrix_B} and \eqref{eq:matrix_Phi_l2} we obtain
\begin{equation}
\label{eq:matrix_Phi_B}
	\b \Phi \b B^*
	=
	\left[
	\sum_{l=-\frac{M_\sigma}{2}}^{\frac{M_\sigma}{2}-1} 
	\sum_{k=-\frac M2}^{\frac M2-1} 
	\frac{1}{M_\sigma \hat w(k)} \, \e^{-2\pi\i k \left(x_j-\frac{l}{M_\sigma}\right)} \, \tilde w_m \hspace{-2.75pt} \left(x_h-\tfrac{l}{M_{\sigma}}\right)
	\right]
	_{j,\,h=1}^{N}.
\end{equation}
In addition, we consider analogously to~\eqref{eq:matrix_B_Phi_T}
\begin{equation}
\label{eq:matrix_cp_T}
	\b B \b F \b D \b A^*
	=
	\left[
	\sum_{l=-\frac{M_\sigma}{2}}^{\frac{M_\sigma}{2}-1} 
	\sum_{k=-\frac M2}^{\frac M2-1} 
	\frac{1}{M_\sigma \hat w(k)} \, \e^{-2\pi\i k \left(x_j-\frac{l}{M_\sigma}\right)} \, \tilde w_m \hspace{-2.75pt} \left(x_h-\tfrac{l}{M_{\sigma}}\right)
	\right]
	_{h,\,j=1}^{N}.
\end{equation}
Comparing these matrices~\eqref{eq:matrix_Phi_B} and \eqref{eq:matrix_cp_T} we recognize that they are exactly the transposed of each other, i.\,e.,
\mbox{$
\b \Phi \b B^*
=
\left( \b B \b F \b D \b A^* \right)^T
=
\left( \b B \b K^* \right)^T.
$}
Thereby, \eqref{eq:opt_Phi_B} is equivalent to the problem
\begin{equation*}
\underset{\b B \in \R^{N\times M_\sigma} \colon \b B\, (2m+1)\text{-sparse }}{\text{Minimize }}\ \|\b B \b F \b D \b A^* - \b I_N\|_{\textrm F}^2
\end{equation*}
and can be solved like already seen in Section~\ref{subsubsec:ansatz1}.
It may be recognized that the
objective is a slightly different one since now we seek an approximation of the form 
\mbox{$\b B \b F \b D \b A^* \approx \b I_N$}
instead of
\mbox{$\b B \b F \b D \b A^* \approx M \b I_N$}.
%i.\,e., we have a different constant.
However, the constant does not change the method and thus the same fast algorithm can be used.

\paragraph{Connection to the second approach\label{par_opt_ansatz2}}

For \mbox{$M_\sigma>N$} we consider the estimate~\eqref{eq:dist_M} where minimization leads to the optimization problem
\begin{equation}
\label{eq:opt_B_Phi}
	\underset{\b B \in \R^{N\times M_\sigma} \colon \b B\, (2m+1)\text{-sparse }}{\text{Minimize }}\ \|\b B^* \b \Phi - \b I_{M_\sigma}\|_{\textrm F}^2.
\end{equation}
Again we have a closer look at the appropriate matrix
\begin{equation}
\label{eq:matrix_B_Phi}
	\b B^* \b \Phi
	=
	\left[ 
	\sum_{j=1}^{N} 
	\sum_{k=-\frac M2}^{\frac M2-1} 
	\frac{1}{M_\sigma \hat w(k)} \, \e^{-2\pi\i k \left(x_j-\frac{s}{M_\sigma}\right)} \, \tilde w_m \hspace{-2.75pt} \left(x_j-\tfrac{l}{M_{\sigma}}\right)
	\right]
	_{l,\,s=-\frac{M_\sigma}{2}}
	^{\frac{M_\sigma}{2}-1 }
\end{equation}
and additionally consider the matrix from~\eqref{eq:matrix_B_Phi_T}.
%
%\begin{equation*}
%	\b F \b D \b A^* \b B 
%	=
%	\left[ 
%	\sum_{j=1}^{N} 
%	\sum_{k=-\frac M2}^{\frac M2-1} 
%	\frac{1}{M_\sigma \hat w(k)} \, \e^{-2\pi\i k \left(x_j-\frac{s}{M_\sigma}\right)} \, \tilde w_m \hspace{-2.75pt} \left(x_j-\tfrac{l}{M_{\sigma}}\right)
%	\right]
%	_{s,\,l=-\frac{M_\sigma}{2}}
%	^{\frac{M_\sigma}{2}-1}.
%\end{equation*}
%
Once more, a comparison of \eqref{eq:matrix_B_Phi} and \eqref{eq:matrix_B_Phi_T} yields 
\mbox{$
\b B^* \b \Phi
=
\left( \b F \b D \b A^* \b B \right)^T
=
\left( \b K^* \b B \right)^T,
$}
i.\,e., they are equal except for transposition.
Because \eqref{eq:opt_B_Phi} is hence equivalent to the transposed problem
\begin{equation*}
	\underset{\b B \in \R^{N\times M_\sigma} \colon \b B\, (2m+1)\text{-sparse }}{\text{Minimize }}\ \|\b F \b D \b A^* \b B - \b I_{M_\sigma}\|_{\textrm F}^2,
\end{equation*}
\eqref{eq:opt_B_Phi} can be solved like already discussed in Section~\ref{subsubsec:ansatz2}.

Therefore, we have shown that the frame-theoretical approach can be traced back to the methods for inverting the NFFT introduced in Section~\ref{subsec:rect}.
In other words, the explanations in Section~\ref{sec:frames} can be seen as simply having a different point of view to the problem of Section~\ref{subsec:rect}.

\begin{Remark}
Note that the method of~\cite{GeSo14} is based only on optimizing the diagonal matrix~$\b D$ whereas we used similar ideas to modify the sparse matrix~$\b B$.
\ex
\end{Remark}

\section{Conclusion}
In the present paper we developed new direct methods for computing an inverse NFFT, i.\,e., for the reconstruction of $M$ Fourier coefficients~$\hat f_k$ from given $N$ nonequispaced data~$f_j$.
Furthermore, solutions for the adjoint problem, the reconstruction of function values~$f_j$ from given data~$h_k$, were proposed.
For both problems we derived efficient algorithms for the quadratic setting as well as for the overdetermined and underdetermined case.

In the quadratic setting we used a relation between two evaluations of a trigonometric polynomial which can be deduced by means of Lagrange interpolation.
Ap\-pro\-xi\-ma\-tion of corresponding coefficients by means of the fast summation yields algorithms of complexity~$\mathcal O(N\log N)$.

The main idea for the overdetermined and underdetermined cases was the minimization of a certain Frobenius norm so that the solution can be deduced by means of the least squares method.
All in all, we ended up with precomputational algorithms of complexity \mbox{$\mathcal O(N^2)$} and~\mbox{$\mathcal O(M^2)$}, respectively, whereas the algorithms for the inversion require only \mbox{$\mathcal O(M\log M+N)$} arithmetic operations.

Finally, we investigated an approach based on~\cite{GeSo14} considering frame approximation which can be used to approximate a function~$\hat f$ in the frequency domain and subsequently sample at equispaced points.
By comparing this procedure to the adjoint NFFT we modified the last-mentioned to achieve an iNFFT.
In so doing, we found out that the thereby obtained approaches can be traced back to the methods for the inversion introduced for the overdetermined and underdetermined cases.

For the future it might be of interest to study for what kind of distribution of nodes and which window functions the frame-theoretical approach is applicable.
Moreover, a generalization of the presented methods to higher dimensions is subject of ongoing research.

\section*{Acknowledgments}
The first named author gratefully acknowledges the funding support from the European Union and the
Free State of Saxony (ESF).
Moreover, the authors thank the referees and the editor for their very useful suggestions for improvements.

\bibliographystyle{abbrv}

\end{document}